\newcommand\id{\mbox{\bf 1}}
 \newtheorem{thm}{Theorem}[section]
 \newtheorem{prop}[thm]{Proposition}
 \newtheorem{lem}[thm]{Lemma}
 \newtheorem{cor}[thm]{Corollary}
\numberwithin{equation}{section}
\newtheorem{defn}{{Definition}}[section]
\newtheorem{ex}{{Example}}[section]
\newcommand{\beaz}{\begin{eqnarray*}}
\newcommand{\eeaz}{\end{eqnarray*}}
\newcommand{\bea}{\begin{eqnarray}}
\newcommand{\eea}{\end{eqnarray}}
\begin{document}

\title{On the Faithfulness of $1$-dimensional Topological Quantum Field Theories}
\author{Sonja Telebakovi\' c  Oni\' c}
\affil{Faculty of Mathematics,
\\Studentski trg 16,\\ 11000 Belgrade, Serbia

\vspace{1ex}

\texttt{sonjat@matf.bg.ac.rs }}

\date{}
\maketitle

\vspace{-3ex}

\begin{abstract}

This paper explores $1$-dimensional topological quantum field theories. We separately deal with strict and strong $1$-dimensional topological quantum field theories. The strict one is regarded as a symmetric monoidal functor between the category of $1$-cobordisms and the category of matrices, and the strong one is a symmetric monoidal functor between the category of $1$-cobordisms and the category of finite dimensional vector spaces.
It has been proved that both strict and strong $1$-dimensional topological quantum field theories are faithful.

\end{abstract}

\vspace{.3cm}

\noindent {\small {\it Mathematics Subject Classification} ({\it
2010}): 15A69, 15B34, 18D10, 57R56}

\vspace{.5ex}

\noindent {\small {\it Keywords$\,$}: symmetric monoidal category, oriented manifold, cobordism, topological quantum field theory, Brauerian
representation, Kronecker product, commutation matrix
}

\section{Introduction}

\indent \indent The concept of topological quantum field theory goes back to the work of Witten (\cite{EW}). Mathematical axioms for topological quantum field theories are given by Atiyah (\cite{AM}). The categorical viewpoint is developed in Quinn's lectures (\cite{FK}). According to Atiyah and Quinn, an $n$-dimensional topological quantum field theory ($n$-TQFT) is a symmetric monoidal functor from the category of oriented $n$-cobordisms to the category of finite dimensional vector spaces over some field. Such functor associates a vector space with each closed oriented $(n-1)$-dimensional manifold and associates a linear map with each oriented $n$-cobordism. It is well known that there is a one-to-one correspodence between $2$-TQFTs and commutative Frobenius algebras (the best reference here is \cite[Theorem~3.3.2]{K1}). For another proof of this classical correspondence see \cite[Theorem~4.1]{J14}.

The faithfulness property of TQFTs is very important, since it provides a complete set of invariants for the classification of cobordisms. 
However, the faithfulness problem of TQFTs is seldom investigated in relevant literature. Inspired by the representation of a class of diagrammatic algebras given by Brauer (\cite{BR1}), Do\v sen and Petri\' c have obtained a result that demonstrates the faithfulness of a $1$-TQFT (see \cite[Section 14]{DP4}). This example of a faithful $1$-TQFT maps the $0$-dimensional sphere $S^{0}$ to a matrix Frobenius algebra, as explained in \cite[Section 5]{BPT}. It is shown by Petri\' c and the author (\cite{PT17}) that there is a faithful $2$-TQFT which corresponds to the commutative Frobenius algebra $\mathbb{QZ}_5\otimes Z(\mathbb{QS}_3)$, the tensor product of the group algebra and the center of the group algebra. One question remains unanswered: is there a faithful $n$-TQFT, for $n\geq3$?

The aim of this paper is to prove the faithfulness of all $1$-TQFTs. In Section 2, we summarize the relevant material on the category of $1$-cobordisms, and associate a $0-1$ matrix with each $1$-cobordism. This association is motivated by Brauer's matrix representation of the class of diagrammatic algebras (\cite{BR1}). A generalization of Brauer's representation given by Do\v sen and Petri\' c (\cite{DP3,DP4}) leads to a symmetric monoidal functor between the category of $1$-cobordisms and the category of matrices. In this way we obtain what will be referred to as the Brauerian functor. Our results on the faithfulness of this functor are presented in Section 3. In Section 4, we introduce the notion of a strict $1$-TQFT in order to use matrix techniques.
The main result of this section is that every strict $1$-TQFT, $F:1Cob \rightarrow Mat_{\mathbb{K}}$, mapping the null-dimensional manifold consisting of one point to a natural number $p\geq 2$, is faithful, since it coincides with the Brauerian functor up to multiplication by invertible matrices. Roughly speaking, the faithfulness of $F$ means that $F(K)$ is a complete set of algebraic invariants for $1$-cobordisms.
The last section is devoted to the study of strong $1$-TQFTs (symmetric strong monoidal functors between the category of $1$-cobordisms and the category of finite dimensional vector spaces over a chosen field). We extend our faithfulness result to the case of strong $1$-TQFTs.

\section{The Category $1Cob$}

\quad\,\,Objects of the category $1Cob$ are closed oriented null-dimensional manifolds, consisting of a finite number of points. Every particular point is associated with a sign, that represents its orientation. From now on, we will consider the objects of $1Cob$ as ordered pairs $(n,\varepsilon)$,
where $n=\{0,\ldots,n-1\}$, and $\varepsilon: n\rightarrow\{-1,1\}$.

The morphisms of $1Cob$ are the equivalence classes of $1$-cobordisms $\hfil\hfil\hfil$ $(M,f_{0}:
(n,\varepsilon_{0})\rightarrow M, f_{1}:
(m,\varepsilon_{1})\rightarrow M)$, where $M$ is a compact oriented ${1}$-dimensional manifold, such that its boundary $\partial M$ is a disjoint union of $\Sigma_0$, and $\Sigma_1$; $f_{0}$ is an orientation preserving embedding which image is $\Sigma_{0}$, while $f_{1}$ is an orientation reversing
embedding which image is $\Sigma_{1}$.
The manifolds $\Sigma_0$ and $\Sigma_1$ are called the ingoing and outgoing boundary of $M$, respectively. Two $1$-cobordisms $K=(M,f_0,f_1)$ and
$K'=(M',f'_0,f'_1)$ are equivalent, denoted by $K\sim K'$, if there is an orientation preserving homeomorphism $F:M\rightarrow M'$ such that the following diagram commutes.

\begin{center}
\begin{picture}(200,80)

\put(10,40){\makebox(0,0){$(n, \varepsilon_{0})$}}
\put(90,75){\makebox(0,0){$M$}}
\put(90,0){\makebox(0,0){$M'$}}
\put(170,40){\makebox(0,0){$(m, \varepsilon_{1})$}}
\put(38,63){\makebox(0,0){$f_0$}}
\put(142,63){\makebox(0,0){$f_1$}}
\put(38,17){\makebox(0,0){$f'_0$}}
\put(142,17){\makebox(0,0){$f'_1$}}
\put(100,40){\makebox(0,0){$F$}}

\put(30,50){\vector(2,1){47}} \put(30,30){\vector(2,-1){47}}
\put(150,50){\vector(-2,1){47}}
\put(150,30){\vector(-2,-1){47}} \put(90,60){\vector(0,-1){40}}
\end{picture}
\end{center}

The category $1Cob$ is a strict monoidal with respect to the sum on objects
$$(n,\varepsilon_{0})+(n',\varepsilon_{1})=(n+n',\varepsilon_{0}+\varepsilon_{1}),$$
where $\varepsilon_{0}+\varepsilon_{1}:n+n'\rightarrow
\{-1,1\}$ is defined by \begin{equation*}
 (\varepsilon_{0}+\varepsilon_{1})(x)=\begin{cases}
    \varepsilon_{0}(x), & \text{{if} $x\in n$}\\
    \varepsilon_{1}(x-n), & \text{{if} $x\not\in n,$}
  \end{cases}
\end{equation*}
and the operation of ``putting side by side'' on morphisms, denoted by $\otimes$.

The category $1Cob$ is also a symmetric monoidal with respect to the family of cobordisms $\tau_{n,m}: (n+m, \varepsilon_{0}+\varepsilon_{1})
\rightarrow (m+n, \varepsilon_{1}+\varepsilon_{0})$ corresponding to permutations on $n+m$. For example, $\tau_{3,2}$ is illustrated by the following picture

\begin{center}
\begin{picture}(75,40)

\put(5,32){\vector(1,-1){26}}
\put(35,32){\vector(1,-1){25}}
\put(45,7){\vector(-1,1){25}}
\put(0,7){\vector(2,1){50}}
\put(20,7){\line(2,1){50}}
\put(1,35){$+$}
\put(17,35){$-$}
\put(30,35){$+$}
\put(47,35){$-$}
\put(65,35){$+$}

\put(-3,0){$-$}
\put(15,0){$+$}
\put(30,0){$+$}
\put(45,0){$-$}
\put(60,0){$+$}

\put(22,8){\vector(-2,-1){3}}
\end{picture}
\end{center}

Every morphism of $1Cob$ is completely determined by a $1$-dimensional manifold $M$ and two sequences of its boundary points, denoted by $$(0,0),
(1,0),\ldots,(n-1,0),$$
$$(0,1), (1,1),\ldots,(m-1, 1),$$ where the first one corresponds to the ingoing boundary $\Sigma_{0}$, and the second one to the outgoing boundary $\Sigma_{1}$.

Every $1$-cobordism $K=(M, \Sigma_{0}, \Sigma_{1}):
(n,\varepsilon_{0})\rightarrow(m,\varepsilon_{1})$ induces the following equivalence relation $R_{K}$ on the set $(n\times\{0\})\cup(m\times\{1\})$. For
$(i,k), (j,l)$ elements of
$(n\times\{0\})\cup(m\times\{1\})$, we have that $$((i,k), (j,l))\in
R_{K}\ \mbox{iff}$$  $$\mbox{points}\ (i,k)\
\mbox{and}\ (j,l)\ \mbox{belong to the same connected component of}\ M.$$

Let $c_{K}$ denote the number of connected components of $K$, which are homeomorphic to the circle $S^{1}$. It is clear that for cobordisms
$K,L:(n,\varepsilon_{0})\rightarrow(m,\varepsilon_{1})$ the following proposition holds.

\begin{prop}\label{ekvkobordizmi}{The $1$-cobordisms $K$ and $L$ are equivalent iff $R_{K}=R_{L}$ and
$c_{K}=c_{L}$.}\end{prop}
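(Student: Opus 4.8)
The plan is to exploit the classification of compact $1$-manifolds: every connected component of $M$ is homeomorphic either to the circle $S^{1}$ or to the closed interval $[0,1]$. Consequently the components that carry boundary are exactly the arcs, each arc having precisely two endpoints lying in $\Sigma_{0}\cup\Sigma_{1}$. This shows that the relation $R_{K}$ is nothing but a pairing of the $n+m$ labelled boundary points, the two endpoints of each arc forming a single two-element class, while $c_{K}$ records the number of the remaining, boundaryless, circle components. Thus the pair $(R_{K},c_{K})$ encodes the full combinatorial type of $K$, and the task is to show that this type is a complete invariant for the relation $\sim$.

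The forward implication is routine. If $F:M\to M'$ is an orientation-preserving homeomorphism with $F\circ f_{0}=f'_{0}$ and $F\circ f_{1}=f'_{1}$, then $F$ restricts to a bijection between connected components that preserves the homeomorphism type of each component and, by the commuting diagram, fixes the labels of the boundary points. Hence circles are sent to circles, giving $c_{K}=c_{L}$, and two labelled points lie on a common component of $M$ iff their images do, giving $R_{K}=R_{L}$.

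For the converse I would build $F$ one component at a time. Since $c_{K}=c_{L}$, I would choose any bijection between the circle components of $M$ and those of $M'$ and realise each matched pair by an orientation-preserving homeomorphism of $S^{1}$; these always exist, and there are no boundary constraints to respect. Since $R_{K}=R_{L}$, each arc of $M$ joins a pair of labelled points that is also joined by a unique arc of $M'$, and I would map the first arc onto the second by a homeomorphism carrying each labelled endpoint to the endpoint of equal label. The only point to verify is that such a homeomorphism can be chosen orientation-preserving, and this is exactly where the cobordism conventions enter: an oriented arc has a well-defined initial and terminal endpoint, and the requirement that $f_{0}$ be orientation preserving while $f_{1}$ be orientation reversing forces, for each labelled endpoint, whether it is initial or terminal purely in terms of $\varepsilon_{0}$ and $\varepsilon_{1}$. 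Because $K$ and $L$ share the same source $(n,\varepsilon_{0})$ and target $(m,\varepsilon_{1})$, corresponding endpoints receive the same initial/terminal designation in $M$ and in $M'$, so the endpoint-matching homeomorphism between arcs automatically respects orientation.

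Assembling these component-wise maps yields a homeomorphism $F:M\to M'$; it is orientation preserving because it is so on each component, and it satisfies $F\circ f_{0}=f'_{0}$ and $F\circ f_{1}=f'_{1}$ by construction, so the defining diagram commutes and $K\sim L$. I expect the main obstacle to be precisely the orientation bookkeeping in the arc step: one must check that the signs $\varepsilon_{0},\varepsilon_{1}$, together with the induced boundary orientation, genuinely determine the orientation of each arc, so that no arc of $M$ is reversed relative to its partner in $M'$. Once this is settled, the remainder is a routine gluing of homeomorphisms defined on disjoint components.
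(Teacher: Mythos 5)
Your proof is correct. The paper offers no argument for this proposition at all---it is introduced with ``It is clear that\dots''---so there is no authorial proof to compare against; your component-by-component construction is the standard justification being left implicit. The one point you flag as a potential obstacle does indeed go through: since $K$ and $L$ share the source $(n,\varepsilon_{0})$ and target $(m,\varepsilon_{1})$, each labelled boundary point carries the same sign and the same ingoing/outgoing designation in $M$ as in $M'$, which determines whether it is the initial or terminal endpoint of its arc; a homeomorphism of $[0,1]$ fixing $0$ and $1$ is increasing, hence orientation preserving, and any two oriented circles admit an orientation-preserving homeomorphism, so the glued map $F$ witnesses $K\sim L$.
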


Let $X$ be an arbitrary set, let $R\subseteq X^{2}$ be an equivalence relation on $X$ and let $p$ be the set
$\{0,1,\ldots,p-1\}$, $p\geq 2$.
Consider the following set of functions:
$$\mathcal{F}^{\ \!=}(R)=\{f: X\rightarrow p\ |\
(\forall x,y\in X)\ \bigl{(}(x,y)\in R\Rightarrow f(x)=f(y)\bigr{)}\}.$$
\begin{prop}\label{KONDOM}{{(}Do\v sen and Petri\' c  \cite{DP1}, \cite{DP2}}{)}\,
{If $R_{1},R_{2}\subseteq X^{2}$ are equivalence relations, then $R_{1}=R_{2}$ iff $\mathcal{F}^{\
\!=}(R_{1})=\mathcal{F}^{\ \!=}(R_{2})$}.
\end{prop}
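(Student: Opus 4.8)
The plan is to prove the nontrivial implication by showing that an equivalence relation $R$ can be reconstructed from the set $\mathcal{F}^{=}(R)$, so that the assignment $R\mapsto\mathcal{F}^{=}(R)$ is injective on equivalence relations. The forward implication, that $R_{1}=R_{2}$ yields $\mathcal{F}^{=}(R_{1})=\mathcal{F}^{=}(R_{2})$, is immediate from the definition, so the whole argument rests on recovering $R$ from $\mathcal{F}^{=}(R)$.

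The central claim I would establish is the reconstruction formula
$$(x,y)\in R\iff\bigl(\forall f\in\mathcal{F}^{=}(R)\bigr)\ f(x)=f(y).$$
The left-to-right direction is nothing but the defining property of $\mathcal{F}^{=}(R)$. For the converse I would argue contrapositively: assuming $(x,y)\notin R$, I would exhibit a single function $f\in\mathcal{F}^{=}(R)$ with $f(x)\neq f(y)$, which refutes the right-hand side.

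Concretely, since $R$ is an equivalence relation the class $[x]_{R}$ is well defined, and I would take $f$ to be a two-valued characteristic function of that class: set $f(z)=0$ if $(z,x)\in R$ and $f(z)=1$ otherwise. The hypothesis $p\geq 2$ is exactly what guarantees two distinct target values, so this is the step where that assumption is used. Checking $f\in\mathcal{F}^{=}(R)$ is routine: if $(z,w)\in R$ then transitivity forces $z$ and $w$ to lie simultaneously inside or outside $[x]_{R}$, whence $f(z)=f(w)$. Since $(x,x)\in R$ and $(y,x)\notin R$, we obtain $f(x)=0\neq 1=f(y)$, as required.

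With the reconstruction formula in hand the conclusion follows at once: if $\mathcal{F}^{=}(R_{1})=\mathcal{F}^{=}(R_{2})$, then the right-hand side of the formula coincides for $R_{1}$ and $R_{2}$, and hence $R_{1}=R_{2}$. I do not anticipate any genuine obstacle; the only points demanding care are verifying that the separating function really lies in $\mathcal{F}^{=}(R)$ and that a second codomain value is available, both of which hinge on $R$ being an equivalence relation and on $p\geq 2$.
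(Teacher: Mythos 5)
Your proof is correct and complete: the separating function argument (the two-valued characteristic function of $[x]_{R}$, using $p\geq 2$) is exactly what is needed for the nontrivial direction. Note that the paper itself gives no proof of this proposition---it is quoted from Do\v sen and Petri\' c---so there is nothing internal to compare against, but your reconstruction of $R$ from $\mathcal{F}^{\ \!=}(R)$ is the standard argument underlying the cited result.
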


We associate a non-zero matrix $A(K)$ of order $p^{m}\times p^{n}$
with each cobordism $K:(n,\varepsilon_{0})\rightarrow(m,\varepsilon_{1})$ in the following way.
Let $R_{K}\subseteq ((n\times\{0\})\cup(m\times\{1\}))^{2}$ be equivalence relation corresponding to the cobordism $K$.
The number of rows of $A(K)$ is equal to the number of functions $m\rightarrow p$.
Each of these functions can be envisaged as a sequence of length $m$ of elements of $\{0,1,\ldots,p-1\}$.
The set of these sequences may be ordered lexicographically so that $00\ldots 0$ is the first, and
$(p-1)(p-1)\ldots (p-1)$ is the last in this ordering. Since $(p^{m},\leq)$ is isomorphic to the set of these
sequences, the rows of $A(K)$ can be identified by functions from $m$ to $p$. Let $f_{i}:m\rightarrow p$
denote the function corresponding to the $i$-th row. Columns of $A(K)$ can be identified by functions from $n$ to $p$.
Let $g_{j}:n\rightarrow p$ denote the function corresponding to the $j$-th column.
Let $[g_{j},f_{i}]:(n\times\{0\})\cup(m\times\{1\})\rightarrow p$
be the function defined by $$[g_{j},f_{i}](x) = \left\{
	\begin{array}{ll}
		g_{j}(\pi_{0}(x)),  & \mbox{if}\ x \in n\times\{0\} \\
		f_{i}(\pi_{1}(x)), &  \mbox{if}\ x \in m\times\{1\}
	\end{array} \right.,$$ where $\pi_{0}:
n\times\{0\}\rightarrow n$ and $\pi_{1}: m\times\{1\}\rightarrow
m$ are bijections given by $\pi_{0}(u,0)=u$ and $\pi_{1}(v,1)=v$, respectively.
Element $A(K)[i,j]$ in the $i$-th row and $j$-th column of $A(K)$ is equal to $1$ iff $[g_{j},f_{i}] \in \mathcal{F}^{\
\!=}(R_{K})$, otherwise it is $0$.

\begin{ex}\label{prim2}{If $K:(4,\varepsilon_{0})\rightarrow(2,\varepsilon_{1})$
is $1$-cobordism illustrated by the following picture
\begin{center}
\begin{picture}(100,40)

\qbezier(10,32)(30,7)(50,32)
\put(100,7){\vector(-1,1){25}}
\put(77,7){\line(1,1){25}}
\put(72,35){$-$}
\put(4,35){$+$}
\put(99,35){$+$}
\put(48,35){$-$}

\put(74,-2){$+$}
\put(95,-2){$-$}

\put(30,20){\vector(1,0){3}}
\put(79,9){\vector(-1,-1){3}}
\end{picture}
\end{center}

and $p=2$, the corresponding matrix $A(K)$ is equal to

\begin{changemargin}{-0.9cm}{0cm}

{\scriptsize
\renewcommand{\kbldelim}{(}% Left delimiter
\renewcommand{\kbrdelim}{)}% Right delimiter
\[
 \kbordermatrix{
    & 0000 & 0001 & 0010 & 0011 & 0100 &0101&0110&0111&1000&1001&1010&1011&1100&1101&1110& 1111 \\
    00 & 1 & 0 & 0 & 0 & 0 & 0 & 0 & 0 & 0 & 0 & 0 & 0 & 1 & 0 & 0 & 0\\
    01 & 0 & 0 & 1 & 0 & 0 & 0 & 0 & 0 & 0 & 0 & 0 & 0 & 0 & 0 & 1 & 0\\
    10 & 0 & 1 & 0 & 0 & 0 & 0 & 0 & 0 & 0 & 0 & 0 & 0 & 0 & 1 & 0 & 0\\
    11 & 0 & 0 & 0 & 1 & 0 & 0 & 0 & 0 & 0 & 0 & 0 & 0 & 0 & 0 & 0 & 1 \\}.
\]
}

\end{changemargin}

For example, element $A(K)[2,3]$ is equal to $1$, since the sequences $01$ and $0010$ corresponding to the second row and the third column ``match'' into the picture.

\begin{center}
\begin{picture}(100,40)

\qbezier(10,32)(30,7)(50,32)
\put(100,7){\line(-1,1){25}}
\put(77,7){\line(1,1){25}}
\put(72,35){$1$}
\put(4,35){$0$}
\put(99,35){$0$}
\put(48,35){$0$}

\put(74,-2){$0$}
\put(95,-2){$1$}

\end{picture}
\end{center}}
\end{ex}
\vspace{-5mm}

\section{Brauerian representation}

Let $Mat_{\mathbb{K}}$ be a category whose objects are natural numbers, and whose morphisms from $n$ to $m$ are all $m \times
n$ matrices over the field $\mathbb{K}$ of characteristic zero.
The composition of morphisms is matrix multiplication, and the identity morphism $n\rightarrow n$ is the identity matrix of order $n$. The category $(Mat_{\mathbb{K}}, \otimes, 1,
S_{n,m})$ is a symmetric strict monoidal with respect to the multiplication on objects, and the Kronecker product on morphisms, and the family of $mn
\times nm$ matrices $S_{n,m}$. The matrix $S_{n,m}$ is the matrix representation of the linear map $\sigma: \mathbb{K}^{n}\otimes
\mathbb{K}^{m}\rightarrow \mathbb{K}^{m}\otimes \mathbb{K}^{n}$
with respect to the standard ordered bases, defined on the basis vectors by $\sigma(e_{i}\otimes f_{j})=f_{j}\otimes e_{i}$.
We call $S_{n,m}$ the commutation matrix.

Let us now consider the following functor $B$ from $1Cob$ to $Mat_{\mathbb{K}}$. It is defined by $B(n, \varepsilon)=p^{n}$ on objects, where $p\geq 2$. For morphism $K:
(n,\varepsilon_{0})\rightarrow (m,\varepsilon_{1})$ of $1Cob$, it is given by $B(K)=p^{a}\cdot A(K)$, where $a$
is the number of circular components of $K$, and $A(K)$ is $(0,1)$-matrix of order $p^{m} \times p^{n}$ associated with $K$.

It can be proved that the following proposition holds.
\begin{prop}\label{funktorijalnost}{$B$ is a strict symmetric monoidal functor from $\hfill\hfill\hfill\hfill\hfill\hfill\hfill\hfill\hfill\hfill$ $(1Cob, \otimes, (0,\varepsilon),
\tau_{n,m})$ to $(Mat_{\mathbb{K}}, \otimes, 1,
S_{n,m})$.}\end{prop}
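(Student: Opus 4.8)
The plan is to verify in turn the defining clauses of a strict symmetric monoidal functor: well-definedness on equivalence classes, preservation of objects and unit, of identities, of composition, of the tensor of morphisms, and of the symmetries. First I would observe that $B(K)=p^{a}A(K)$ depends on $K$ only through the relation $R_K$ and the number $c_K=a$ of circular components, so by Proposition~\ref{ekvkobordizmi} $B$ is well defined on $\sim$-classes. On objects $B$ sends $(n,\varepsilon_0)+(m,\varepsilon_1)=(n+m,\ldots)$ to $p^{n+m}=p^{n}\cdot p^{m}$ and the unit $(0,\varepsilon)$ to $p^{0}=1$, matching the monoidal structure of $Mat_{\mathbb{K}}$; and the identity cobordism on $(n,\varepsilon)$, being $n$ parallel strands with no circles, has $R$ pairing $(l,0)$ with $(l,1)$, so $A[i,j]=1$ exactly when $g_j=f_i$, giving $B(\mathbf{1})=I_{p^{n}}$. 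All of these are immediate from the definitions.

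The core of the argument is preservation of composition, $B(L\circ K)=B(L)\cdot B(K)$ for $K:(n,\varepsilon_0)\to(m,\varepsilon_1)$ and $L:(m,\varepsilon_1)\to(q,\varepsilon_2)$. Writing the matrix product entrywise, $(A(L)\cdot A(K))[i,j]=\sum_{h}A(L)[i,h]\,A(K)[h,j]$ with $h$ ranging over functions $m\to p$, the defining rule for $A$ makes a summand equal to $1$ precisely when $[g_h,f_i]\in\mathcal{F}^{=}(R_L)$ and $[g_j,g_h]\in\mathcal{F}^{=}(R_K)$. Thus the entry counts the ``middle labellings'' $h:m\to p$ that are simultaneously compatible with the fixed boundary data $g_j$ on the ingoing side and $f_i$ on the outgoing side. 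I would encode this by forming the equivalence relation $R$ generated by $R_K\cup R_L$ on the combined set of ingoing, middle and outgoing points, and noting that its restriction to the outer points is exactly $R_{L\circ K}$, since two outer points lie in one component of $L\circ K$ iff they are joined by a path that may pass back and forth through the glued middle.

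The main obstacle is the bookkeeping that converts this count into the correct power of $p$. A compatible $h$ is exactly an extension of $(g_j,f_i)$ to an $R$-respecting labelling of all three layers. If $(g_j,f_i)$ is not $R$-consistent on the outer points there is no such $h$ and the entry is $0$, agreeing with $A(L\circ K)[i,j]=0$; otherwise every $R$-class meeting the outer boundary has its value on the middle points forced, while every $R$-class contained entirely in the middle layer may be labelled freely in $p$ ways. Such purely-middle classes are precisely the connected pieces whose boundary points are all glued away, i.e. the newly created circular components of $L\circ K$; calling their number $c$, this yields $A(L)\cdot A(K)=p^{c}A(L\circ K)$. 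Since the circles of $L\circ K$ are those of $K$, those of $L$, and these $c$ new ones, $a_{L\circ K}=a_K+a_L+c$, whence $B(L)\cdot B(K)=p^{a_K+a_L}A(L)A(K)=p^{a_K+a_L+c}A(L\circ K)=B(L\circ K)$.

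It remains to treat the tensor and the symmetries on morphisms. For $B(K\otimes L)=B(K)\otimes B(L)$ the relation $R_{K\otimes L}$ is the disjoint union of $R_K$ and $R_L$, so a combined labelling respects it iff each block does; splitting the row and column functions accordingly gives $A(K\otimes L)[(i_1,i_2),(j_1,j_2)]=A(K)[i_1,j_1]\,A(L)[i_2,j_2]$, which is the Kronecker product once one checks that the lexicographic ordering of length-$(m_1\pl m_2)$ sequences is the block ordering underlying $\otimes$ on matrices, and since scalars factor out of Kronecker products and $a_{K\otimes L}=a_K+a_L$ this yields the claim. Finally $\tau_{n,m}$ has no circles and its relation pairs the first block of ingoing points with the second block of outgoing points and vice versa, so $A(\tau_{n,m})[f,g]=1$ iff the column labelling $g=(g',g'')$ equals $(f'',f')$ read off the row labelling $f=(f',f'')$; this is exactly the entry pattern of the commutation matrix $S_{p^{n},p^{m}}$ representing $\sigma(e\otimes f)=f\otimes e$, giving $B(\tau_{n,m})=S_{p^{n},p^{m}}$. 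Assembling these verifications establishes that $B$ is a strict symmetric monoidal functor.
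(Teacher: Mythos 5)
Your proof is correct. The paper itself does not write out a proof of this proposition --- it only remarks that one can be adapted from Do\v{s}en and Petri\'{c} \cite[Section~5, Proposition~4]{DP2} --- and your argument, in particular the count of $p^{c}$ free labellings of the purely-middle equivalence classes (which are exactly the circles created by gluing) to get $A(L)\cdot A(K)=p^{c}A(L\circ K)$ and hence $B(L\circ K)=B(L)\cdot B(K)$, is precisely the standard adaptation the paper has in mind.
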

The proof of this proposition can be adapted from \cite[Section~5, Proposition~4]{DP2}.

The functor $B$ is related to the matrix representation of a class of diagrammatic algebras given by Brauer (see \cite{BR1},\cite{DP3} for more details). Therefore, we call $B$ Brauerian functor.

\begin{prop}\label{vernost}{$B$ is faithful.}\end{prop}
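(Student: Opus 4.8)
The plan is to reduce faithfulness of $B$ to the two characterizations already available: Proposition~\ref{ekvkobordizmi}, which says that $K\sim L$ iff $R_K=R_L$ and $c_K=c_L$, and Proposition~\ref{KONDOM}, which says that $R_K=R_L$ iff $\mathcal{F}^{\ \!=}(R_K)=\mathcal{F}^{\ \!=}(R_L)$. Fixing a source $(n,\varepsilon_0)$ and a target $(m,\varepsilon_1)$, I would take two cobordisms $K,L:(n,\varepsilon_0)\rightarrow(m,\varepsilon_1)$ with $B(K)=B(L)$ and aim to conclude $K=L$ in $1Cob$.

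The crucial remark is that $B(K)=p^{c_K}\cdot A(K)$ is a matrix all of whose entries lie in $\{0,p^{c_K}\}$, and that $A(K)$ is never the zero matrix. Indeed, with $X=(n\times\{0\})\cup(m\times\{1\})$, any constant function $X\rightarrow p$ respects every equivalence relation, so $\mathcal{F}^{\ \!=}(R_K)\neq\emptyset$ and $A(K)$ has at least one entry equal to $1$. Consequently, from the matrix $B(K)$ alone one can recover both the scalar $p^{c_K}$, as the common value of its nonzero entries, and the $(0,1)$-matrix $A(K)$, as the indicator of its nonzero positions.

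From $B(K)=B(L)$ I would first read off the nonzero value to obtain $p^{c_K}=p^{c_L}$; since $p\geq 2$, the map $c\mapsto p^{c}$ on nonnegative integers is strictly increasing, hence injective, so $c_K=c_L$. Dividing the equality of matrices by this common nonzero scalar then yields $A(K)=A(L)$. By the defining property of the entries, $A(K)[i,j]=1$ exactly when $[g_j,f_i]\in\mathcal{F}^{\ \!=}(R_K)$, and the assignment $(i,j)\mapsto[g_j,f_i]$ is a bijection between matrix positions and functions $X\rightarrow p$; hence $A(K)=A(L)$ is equivalent to $\mathcal{F}^{\ \!=}(R_K)=\mathcal{F}^{\ \!=}(R_L)$. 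Proposition~\ref{KONDOM} then gives $R_K=R_L$, and together with $c_K=c_L$, Proposition~\ref{ekvkobordizmi} yields $K\sim L$, that is $K=L$ as morphisms of $1Cob$.

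I expect the only genuinely delicate point to be the bookkeeping that the correspondence between the entries of $A(K)$ and the elements of $\mathcal{F}^{\ \!=}(R_K)$ is a true bijection onto all functions $X\rightarrow p$, so that equality of the two matrices is literally equality of the two function sets, together with the observation that $A(K)$ is nonzero, which is precisely what allows the scalar $p^{c_K}$ to be detected from $B(K)$. Everything else is a direct chain through the two cited propositions.
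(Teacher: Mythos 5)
Your proposal is correct and follows essentially the same route as the paper's own proof: read off $c_K=c_L$ from the common nonzero entry value (using $p\geq 2$), deduce $A(K)=A(L)$, translate this into $\mathcal{F}^{\ \!=}(R_K)=\mathcal{F}^{\ \!=}(R_L)$ via the correspondence $(i,j)\mapsto[g_j,f_i]$, and conclude with Propositions~\ref{KONDOM} and~\ref{ekvkobordizmi}. Your explicit justification that $A(K)$ is nonzero (via constant functions) is a small addition the paper leaves implicit.
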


\begin{proof} Suppose that $K, L:
(n,\varepsilon_{0})\rightarrow(m,\varepsilon_{1})$ are two morphisms of $1Cob$ such that $B(K)=B(L)$. Let $a$ and $b$
denote numbers of circular components of $K$ and $L$, respectively. Since the non-zero matrices $p^{a}\cdot A(K)$ and $p^{b}\cdot A(L)$ are equal, they have the same order $p^{m}\times p^{n}$ and the corresponding elements, which are either zero or the powers of $p$, are identical. We have $a=b$, because $p\geq 2$, and $$A(K)[i,j]=1\ \mbox{iff}\ A(L)[i,j]=1.$$ This means that for every $g_{j}:n\rightarrow p$ and every $f_{i}:m\rightarrow p$ we have $$[g_{j},f_{i}]\in\mathcal{F}^{\ \! =}(R_{K})\ \mbox{iff}\ [g_{j},f_{i}]\in\mathcal{F}^{\
\! =}(R_{L}).$$ As every function $f:(n\times\{0\})\cup(m\times\{1\})\rightarrow p$ is of the form $[g_{j},f_{i}]$ for some $g_{j}:n\rightarrow p$ and
some $f_{i}:m\rightarrow p$, it follows that $\mathcal{F}^{\
\!=}(R_{K})=\mathcal{F}^{\ \!=}(R_{L})$. Applying Proposition \ref{KONDOM}, we conclude that $R_{K}=R_{L}$. By Proposition \ref{ekvkobordizmi}
it follows that $K$ and $L$ are equivalent.
\end{proof}
It is worth pointing out that the faithfulness of the Brauerian functor $B$ is a consequence of the maximality given in \cite[Section 14]{DP4}.

\section{Strict $1$-dimensional Topological Quantum Field Theories}

Let us denote by $+$ a $0$-dimensional manifold, which consists of one point with positive orientation, and by $-$ the same manifold with the opposite orientation. Then, every object of $1Cob$ can be regarded as a sequence
$a_{1}\ldots a_{n}$, where $a_{i} \in \{+,-\}$. Any compact oriented $1$-dimensional manifold $M$ is homeomorphic to the disjoint union of its connected components. Those connected components are either homeomorphic to a closed interval $[0,1]$ or to a circle $S^{1}$. Depending on how the boundary $\partial M$ is decomposed into ingoing and outgoing pieces, we have the following connected cobordisms:

\vspace{0.5em} {\boldmath $1^{0}$} Unit interval $[0,1]$ with its standard orientation regarded as a cobordism from $+$ to
$+$. It represents the identity morphism $id_{+}$ in $1Cob$.

{\boldmath $2^{0}$} Unit interval $[0,1]$ with its standard orientation regarded as a cobordism from $-$ to $-$. It represents the identity morphism $id_{-}$ in $1Cob$.

{\boldmath $3^{0}$} Unit interval $[0,1]$ with its standard orientation regarded as a cobordism from $+-$ to $\emptyset$.
It represents the morphism of $1Cob$, which we denote by $\mathcal{B}$.

{\boldmath $4^{0}$} The cobordism from $-+$ to $\emptyset$, which we denote by $\overline{\mathcal{B}}$.

{\boldmath $5^{0}$} Unit interval $[0,1]$ with its standard orientation regarded as a cobordism from $\emptyset$ to $-+$.
It represents the morphism of $1Cob$, which we denote by $\mathcal{C}$.

{\boldmath $6^{0}$} The cobordism from $\emptyset$ to $+-$, which we denote by $\overline{\mathcal{C}}$.

{\boldmath $7^{0}$} The circle $S^{1}$ regarded as a cobordism from the empty set to itself.

\begin{center}
\begin{picture}(330,60)

\put(2,50){\vector(0,-1){35}} \put(-2,8){$+$} \put(-2,52){$+$}
\put(-2,-5){$id_{+}$}

\put(32,15){\vector(0,1){35}} \put(28,8){$-$} \put(28,52){$-$}
\put(28,-5){$id_{-}$}

\qbezier(50,30)(70,5)(90,30) \put(70,18){\vector(1,0){3}}
\put(47,35){$+$} \put(87,35){$-$}
\put(67,-5){$\mathcal{B}$}

\qbezier(110,30)(130,5)(150,30) \put(130,18){\vector(-1,0){3}}
\put(107,35){$-$} \put(147,35){$+$}
\put(127,-5){$\overline{\mathcal{B}}$}

\qbezier(170,15)(190,40)(210,15) \put(190,28){\vector(1,0){3}}
\put(167,8){$-$} \put(207,8){$+$}
\put(187,-5){$\mathcal{C}$}

\qbezier(230,15)(250,40)(270,15) \put(250,28){\vector(-1,0){3}}
\put(227,8){$+$} \put(267,8){$-$}
\put(247,-5){$\overline{\mathcal{C}}$}

\put(300,30){\circle{40}} \put(300,52){$\emptyset$}
\put(300,2){$\emptyset$} \put(320,30){\vector(0,0){0}}
\put(325,25){$S^{1}$}

\end{picture}
\end{center}

\begin{defn}
A strict $1$-dimensional topological quantum field theory ($1$-TQFT) is a strict symmetric monoidal functor between the category $1Cob$ and the category $Mat_{\mathbb{K}}$.
\end{defn}

The following proposition is motivated by \cite[Proposition~1.1.8]{L1}.

\begin{prop}\label{TQFT}{
Let $F$ be a strict $1$-TQFT. If $p=F(+)$ and $q=F(-)$, then $p=q$.
}
\end{prop}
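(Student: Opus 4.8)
The plan is to exploit the fact that in $1Cob$ the objects $+$ and $-$ are dual to one another, witnessed by the elementary cobordisms $\mathcal{C}:\emptyset\to -+$ and $\mathcal{B}:+-\to\emptyset$, and then to push this duality through the functor $F$ into $Mat_{\mathbb{K}}$, where mutually dual objects must have equal dimension. Concretely, I would first record the two zigzag (triangle) identities that these cobordisms satisfy in $1Cob$:
$$(\mathcal{B}\otimes id_+)\circ(id_+\otimes\mathcal{C})=id_+,\qquad (id_-\otimes\mathcal{B})\circ(\mathcal{C}\otimes id_-)=id_-.$$
Each is an identity between morphisms $+\to +$ (respectively $-\to -$), and each holds because straightening the corresponding zigzag arc produces a single interval with no circular components. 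This can be verified directly from Proposition \ref{ekvkobordizmi}: one checks that the two sides of each equation induce the same relation $R_K$ and the same number $c_K=0$ of circular components, by tracing the glued boundary points through the composite.

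Next I would apply $F$. Setting $p=F(+)$ and $q=F(-)$, and using that $F$ is strict monoidal — so that $F(K\otimes L)=F(K)\otimes F(L)$ is the Kronecker product and $F(K\circ L)=F(K)F(L)$ is matrix multiplication — the coevaluation $u:=F(\mathcal{C})$ is a $qp\times 1$ column and the evaluation $v:=F(\mathcal{B})$ is a $1\times pq$ row. The two zigzag identities then translate into the matrix equations
$$(v\otimes I_p)(I_p\otimes u)=I_p,\qquad (I_q\otimes v)(u\otimes I_q)=I_q.$$
The key bookkeeping step is to reshape these. Regarding $u$ as a $q\times p$ matrix $U$ (rows indexed by the $-$-coordinate, columns by the $+$-coordinate) and $v$ as a $p\times q$ matrix $V$, a direct expansion of the Kronecker products in index notation collapses the first equation to $VU=I_p$ and the second to $UV=I_q$.

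Finally, from $VU=I_p$ and $UV=I_q$ with $U$ of size $q\times p$ and $V$ of size $p\times q$, I would conclude $p=q$: the first identity forces $U$ to have rank $p$, whence $p\le q$, while the second forces $U$ to have rank $q$, whence $q\le p$; equivalently, $U$ and $V$ are two-sided inverses and therefore square matrices of the same size. I expect the principal obstacle to lie neither in the topology nor in the final linear algebra, but in the careful index bookkeeping of the reshaping step: one must fix consistent orderings of the tensor factors of $-+$, $+-$, $+-+$ and $-+-$ and keep track of how the identity blocks interleave under the Kronecker product, so that the abstract zigzags genuinely reduce to $VU=I_p$ and $UV=I_q$ rather than to some permuted variant that would obscure the rank argument.
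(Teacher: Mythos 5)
Your proof is correct, and its second half takes a genuinely different route from the paper's. The first halves coincide: both use the zigzag $(\mathcal{B}\otimes id_+)\circ(id_+\otimes\mathcal{C})=id_+$ and the strict monoidality of $F$ to reshape $F(\mathcal{B})$ and $F(\mathcal{C})$ into matrices $V\in\mathcal{M}_{p\times q}$ and $U\in\mathcal{M}_{q\times p}$ (the paper's $X$ and $Y$) satisfying $VU=E_p$; your index computation does collapse $(v\otimes I_p)(I_p\otimes u)=I_p$ to exactly the system (\ref{sistem}). For the second equation, however, the paper writes the $id_-$ zigzag in terms of $\overline{\mathcal{B}}=\mathcal{B}\circ\tau_{-+}$ and $\overline{\mathcal{C}}=\tau_{-+}\circ\mathcal{C}$, which forces it to invoke the symmetry $F(\tau_{-+})=S_{q,p}$ and the commutation-matrix identities (\ref{Eq_11})--(\ref{Eq_12}) before arriving at $UV=E_q$. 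You instead use the other triangle identity $(id_-\otimes\mathcal{B})\circ(\mathcal{C}\otimes id_-)=id_-$, which involves only $\mathcal{B}$ and $\mathcal{C}$ and bypasses the symmetric structure entirely; this identity does hold in $1Cob$ (both sides are a single oriented arc with $c_K=0$, checkable via Proposition \ref{ekvkobordizmi}), and its image under $F$ reduces by the same kind of index bookkeeping to $UV=E_q$. Your version is therefore shorter, needs no facts about commutation matrices, and would work for any strict monoidal (not necessarily symmetric) functor; what the paper's detour buys is the explicit formulas for $F(\overline{\mathcal{B}})$ and $F(\overline{\mathcal{C}})$ as permuted reshapings of $X$ and $Y$, which are reused later in the proof of Proposition \ref{TQFT3}. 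The concluding rank argument ($VU=E_p$ gives $p\leq q$, $UV=E_q$ gives $q\leq p$) is the same in both, and like the paper you obtain the stronger fact that $U$ and $V$ are mutually inverse.
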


\begin{proof} Applying the functor $F$ to the morphism $\mathcal{B}: +-
\rightarrow \emptyset$ of $1Cob$, we obtain the morphism $F(\mathcal{B}): F(+-)\rightarrow F(\emptyset)$ of
$Mat_{\mathbb{K}}$. Since the functor $F$ is a strict monoidal, we have $F(+-)=F(+) \otimes F(-)=p\cdot q$ and $F(\emptyset)=1$. Hence, the matrix $F(\mathcal{B})$ has order $1 \times (pq)$. Let us introduce the notation $$F(\mathcal{B})= \left[\!\!
\begin{array}{ccccccccccccccccccc} \beta_{11} & \ldots & \beta_{1q} & \big{|} & \beta_{21} & \ldots & \beta_{2q} & \big{|} & \ldots & \big{|} & \beta_{p1} & \ldots & \beta_{pq}\\
\end{array} \!\!\right]
$$ and $X=\left[\!\!
\begin{array}{cccccccccccccccc} \beta_{11}  & \ldots & \beta_{1q}\\
\beta_{21} & \ldots & \beta_{2q}\\
\vdots & \vdots & \vdots \\
\beta_{p1} & \ldots & \beta_{pq}
\end{array} \!\!\right].
$ Under the standard isomorphism $H:\mathcal{M}_{p\times
q}\rightarrow \mathcal{M}_{1\times pq}$, we have $H(X)=F(\mathcal{B})$.

Similarly, applying the functor $F$ to the morphism $\mathcal{C}:\emptyset\rightarrow -+$, we obtain the morphism $F(\mathcal{C}):1\rightarrow q \cdot p$ of $Mat_{\mathbb{K}}$, i.e.~the matrix of order $(qp) \times 1$.
Set $$F(\mathcal{C})=\left[\!\!
\begin{array}{ccccccccccccccccccc} \gamma_{11} & \ldots & \gamma_{1p} & \big{|} & \gamma_{21} & \ldots  & \gamma_{2p} & \big{|} & \ldots & \big{|} & \gamma_{q1} & \ldots & \gamma_{qp}\\
\end{array} \!\!\right]^{T}$$ and
$Y=\left[\!\!
\begin{array}{cccccccccccccccc} \gamma_{11}  & \ldots & \gamma_{1p}\\
\gamma_{21} & \ldots & \gamma_{2p}\\
\vdots & \vdots & \vdots \\
\gamma_{q1} & \ldots & \gamma_{qp}
\end{array} \!\!\right].
$ Under the standard isomorphism $L:\mathcal{M}_{q\times
p}\rightarrow \mathcal{M}_{qp\times 1}$, we have $L(Y)=F(\mathcal{C})$.

We consider the identity cobordism $id_{+}$ and its decomposition into two cobordisms $id_{+} \otimes \mathcal{C}$ and $\mathcal{B} \otimes
id_{+}$, as shown in the picture below.

\begin{center}
\begin{picture}(120,85)

\put(-30,60){\vector(0,-1){35}} \put(-27,20){$+$}
\put(-27,60){$+$} \put(-17,40){$=$}

\put(0,80){\vector(0,-1){35}} \put(3,40){$+$} \put(3,80){$+$}

\qbezier(60,45)(90,80)(120,45) \put(89,62){\vector(1,0){3}}
\put(50,40){$-$} \put(123,40){$+$}

\qbezier(0,40)(30,5)(60,40) \put(31,23){\vector(1,0){3}}

\put(120,40){\vector(0,-1){35}} \put(123,5){$+$}

\put(28,10){$\mathcal{B}$} \put(88,65){$\mathcal{C}$}

\end{picture}
\end{center}

\noindent Applying the functor $F$ to both sides of the equation $id_{+}=(\mathcal{B} \otimes id_{+}) \circ (id_{+} \otimes
\mathcal{C})$ and using the functorial properties of $F$, we obtain \beaz id_{F(+)}=(F(\mathcal{B}) \otimes
id_{F(+)}) \cdot(id_{F(+)} \otimes
F(\mathcal{C})), \mbox{i.e.}\,\,\\
E_{p}=(F(\mathcal{B}) \otimes E_{p}) \cdot (E_{p} \otimes
F(\mathcal{C})), \eeaz where $E_{p}$ is the identity matrix of order $p$. As a result of matrix multiplication, we get the system of $p^2$ equations
\begin{equation}\label{sistem}\sum_{k=1}^{q} \beta_{ik}\cdot \gamma_{kj} =
\delta_{ij},\;\; i,j\in \{1,\ldots,p\}\end{equation} that can be written in the matrix form $$X \cdot Y= E_{p}.$$
Therefore, the matrix $X\in \mathcal{M}_{p\times q}$ has a right inverse, so its rows are linearly independent, i.e.~row rank is equal to $p$.

If we apply the functor $F$ to
 the morphism $\tau_{-+}: -+
\rightarrow +-$ of $1Cob$, shown in the following picture
\begin{center}
\begin{picture}(120,60)

\put(53,17){\vector(-1,1){30}} \put(27,17){\line(1,1){30}}
\put(29,19){\vector(-1,-1){3}} \put(20,50){$-$}
\put(54,50){$+$} \put(22,8){$+$} \put(50,8){$-$}

\end{picture}
\end{center}
we obtain the morphism $F(\tau_{-+}): F(-) \otimes F(+)\rightarrow
F(+) \otimes F(-)$ of $Mat_{\mathbb{K}}$, i.e.~the matrix of order $(pq) \times (qp)$.
The symmetry of $F$ implies $F(\tau_{-+})=S_{q,p}$, where $S_{q,p}$ is the commutation matrix.
The crucial fact is that the commutative matrix $S_{q,p}$ satisfies the following conditions:
\begin{equation}
\label{Eq_11}S_{qp} \cdot L(Y)=L(Y^{T})
\end{equation}
\begin{equation}
\label{Eq_12}H(X) \cdot S_{qp}=H(X^{T})
\end{equation}
(more details can be found in \cite{MN,
crnac, V})

Since the cobordisms $\overline{\mathcal{B}}$ and $\overline{\mathcal{C}}$ can be decomposed as
$\overline{\mathcal{B}}=\mathcal{B} \circ \tau_{-+}$ and
$\overline{\mathcal{C}}=\tau_{-+} \circ \mathcal{C}$,

\begin{center}
\begin{picture}(160,90)

\qbezier(-90,40)(-60,5)(-30,40) \put(-60,23){\vector(-1,0){3}}
\put(-87,40){$-$} \put(-40,40){$+$}

\put(-20,40){$=$}

\put(60,49){\vector(-2,1){59}} \put(0,49){\line(2,1){59}}
\put(2,50){\vector(-2,-1){3}} \put(3,82){$-$} \put(50,82){$+$}
\put(3,40){$+$} \put(50,40){$-$}

\qbezier(0,40)(30,5)(60,40) \put(31,23){\vector(1,0){3}}

\qbezier(110,40)(140,75)(170,40) \put(139,57){\vector(-1,0){3}}
\put(115,40){$+$} \put(155,40){$-$} \put(180,40){$=$}

\put(260,10){\vector(-2,1){59}} \put(200,10){\line(2,1){59}}
\put(202,11){\vector(-2,-1){3}} \put(203,40){$-$}
\put(250,40){$+$} \put(203,3){$+$} \put(250,3){$-$}

\qbezier(200,49)(230,84)(260,49) \put(229,66){\vector(1,0){3}}
\put(-65,9){$\overline{\mathcal{B}}$}
\put(27,10){$\mathcal{B}$}
\put(136,61){$\overline{\mathcal{C}}$}
\put(226,70){$\mathcal{C}$}

\end{picture}
\end{center}

it follows that $F(\overline{\mathcal{B}})=F(\mathcal{B})\cdot
F(\tau_{-+})$ and $F(\overline{\mathcal{C}})=F(\tau_{-+})\cdot
F(\mathcal{C})$, i.e.~ \beaz
F(\overline{\mathcal{B}})=\left[\!\!
\begin{array}{cccccccccccccccc} \beta_{11} & \ldots & \beta_{1q} & \big{|} &\beta_{21} & \ldots & \beta_{2q} & \big{|} &\ldots & \big{|} &\beta_{p1} & \ldots & \beta_{pq}\\
\end{array} \!\!\right]\cdot S_{qp}\stackrel{\mathrm{(\ref{Eq_12})}}{=\joinrel=\joinrel=}\,\,\\
\left[\!\!
\begin{array}{cccccccccccccccc} \beta_{11} & \ldots & \beta_{p1} & \big{|} &\beta_{12} & \ldots & \beta_{p2} & \big{|} &\ldots & \big{|} &\beta_{1q} & \ldots & \beta_{pq}\\
\end{array} \!\!\right]\eeaz
\beaz F(\overline{\mathcal{C}})=S_{qp}\cdot \left[\!\!
\begin{array}{cccccccccccccccc} \gamma_{11} & \ldots & \gamma_{1p} & \big{|} &\gamma_{21} & \ldots & \gamma_{2p} & \big{|} & \ldots & \big{|} &\gamma_{q1} & \ldots & \gamma_{qp}\\
\end{array} \!\!\right]^{T}\stackrel{\mathrm{(\ref{Eq_11})}}{=\joinrel=\joinrel=}\,\,\\
\left[\!\!
\begin{array}{cccccccccccccccc} \gamma_{11} & \ldots & \gamma_{q1} & \big{|} & \gamma_{12} & \ldots & \gamma_{q2} & \big{|} & \ldots & \big{|} &\gamma_{1p} & \ldots & \gamma_{qp}\\
\end{array} \!\!\right]^{T}\eeaz

Analogously, we can decompose the cobordism $id_{-}$ into two cobordisms $id_{-} \otimes
\overline{\mathcal{C}}$ and $\overline{\mathcal{B}} \otimes
id_{-}$,
\begin{center}
\begin{picture}(120,100)

\put(-30,45){\vector(0,1){35}} \put(-27,40){$-$}
\put(-27,80){$-$} \put(-17,60){$=$}

\put(0,65){\vector(0,1){35}} \put(3,60){$-$} \put(3,100){$-$}

\qbezier(60,65)(90,100)(120,65) \put(89,82){\vector(-1,0){3}}
\put(50,60){$+$} \put(123,60){$-$}

\qbezier(0,60)(30,25)(60,60) \put(31,43){\vector(-1,0){3}}

\put(120,25){\vector(0,1){35}} \put(123,25){$-$}

\put(26,27){$\overline{\mathcal{B}}$}
\put(86,88){$\overline{\mathcal{C}}$}

\end{picture}
\end{center}
\noindent Applying $F$, we see that \beaz
id_{F(-)}=(F(\overline{\mathcal{B}}) \otimes id_{F(-)})
\cdot(id_{F(-)} \otimes
F(\overline{\mathcal{C}})), \mbox{i.e.}\,\,\\
E_{q}=(F(\overline{\mathcal{B}}) \otimes E_{q}) \cdot (E_{q}
\otimes F(\overline{\mathcal{C}})), \eeaz where $E_{q}$ is the identity matrix of order $q$. This yields the system of $q^{2}$ equations
$$\sum_{i=1}^{p} \beta_{ik}\cdot \gamma_{li} =
\delta_{kl},\;\; k,l\in \{1,\ldots,q\}$$ equivalent to the matrix equation
$$Y \cdot X= E_{q}.$$
Consequently, the matrix $X\in \mathcal{M}_{p\times q}$ has a left inverse, so its columns are linearly independent, i.e.~its column rank is equal to $q$. Since the row rank and the column rank of a matrix are equal, it follows that $p=q$ as claimed. Note that we have proved more, namely that the matrices $X$ and $Y$ are inverses to each other.
\end{proof}

Every strict $1$-TQFT, $F:1Cob \rightarrow Mat_{\mathbb{K}}$ is completely determined on objects by the image of positively oriented point $p=F(+)$. By the Proposition \ref{TQFT} the value of $F$ on any object $a_{1}
\ldots a_{n}$ of $1Cob$, consisting of $k$
positively and $l$ negatively oriented points, is $p^{k}\cdot p^{l}=p^{k+l}=p^{n}$. Hence, every strict $1$-TQFT, mapping the manifold consisting of one point to a number $p\geq 2$, on objects coincide with Brauerian representation. Since every cobordism is a finite tensor product of connected cobordisms composed with some applications of $\tau$'s, we only need to know where $F$ sends $id_{+}$, $id_{-}$, $\mathcal{B}$,
$\overline{\mathcal{B}}$, $\mathcal{C}$,
$\overline{\mathcal{C}}$ and $S^{1}$. The matrices $F(id_{+})$ and
$F(id_{-})$ are equal to the identity matrix $E_{p}$ of order $p$.
Due to the fact that the cobordisms $\overline{\mathcal{B}}$ and
$\overline{\mathcal{C}}$ can be decomposed into compositions of the cobordism $\tau_{-+}$ with $\mathcal{B}$ and $\mathcal{C}$, respectively, and the equality $F(\tau_{-+})=S_{pp}$, we only need to describe the following matrices $$F(\mathcal{B})= \left[\!\!
\begin{array}{cccccccccccccccc} \beta_{11} & \ldots & \beta_{1p} &\big{|} & \beta_{21} & \ldots & \beta_{2p} &\big{|} & \ldots &\big{|} & \beta_{p1} & \ldots & \beta_{pp}\\
\end{array} \!\!\right]\in \mathcal{M}_{1\times p^{2}}
,$$ $$F(\mathcal{C})=\left[\!\!
\begin{array}{cccccccccccccccc} \gamma_{11} & \ldots & \gamma_{1p} &\big{|} & \gamma_{21} & \ldots & \gamma_{2p} &\big{|} & \ldots &\big{|} & \gamma_{p1} & \ldots & \gamma_{pp}\\
\end{array} \!\!\right]^{T}\in \mathcal{M}_{p^{2}\times 1}$$ and
$F(S^{1})\in \mathcal{M}_{1\times 1}$.

\begin{prop}\label{TQFT2}{
$F(S^{1})=p$.}
\end{prop}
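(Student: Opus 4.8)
The plan is to realize the circle as a composite of the birth and death cobordisms and then invoke functoriality of $F$, which reduces $F(S^{1})$ to a product of two matrices already made explicit in the proof of Proposition \ref{TQFT}. Concretely, the circle $S^{1}:\emptyset\to\emptyset$ decomposes as $S^{1}=\overline{\mathcal{B}}\circ\mathcal{C}$: the cobordism $\mathcal{C}:\emptyset\to -+$ creates a pair of points and $\overline{\mathcal{B}}:-+\to\emptyset$ annihilates it, the two semicircular arcs glueing along their common boundary into a single circular component. (Equivalently one could use $S^{1}=\mathcal{B}\circ\overline{\mathcal{C}}$; either choice works.) Applying $F$ then gives the $1\times 1$ matrix $F(S^{1})=F(\overline{\mathcal{B}})\cdot F(\mathcal{C})$.

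Next I would substitute the explicit row and column vectors already obtained. From the proof of Proposition \ref{TQFT} we have $F(\mathcal{C})=L(Y)$ and $F(\overline{\mathcal{B}})=H(X^{T})$, so the resulting scalar is the contraction of the entry in position $(i,j)$ of $X^{T}$ against the entry in position $(i,j)$ of $Y$, namely $\sum_{i,j}\beta_{ji}\gamma_{ij}$. The key observation is that this double sum is precisely the trace of the matrix product $X\cdot Y$: since $(X\cdot Y)_{ii}=\sum_{j}\beta_{ij}\gamma_{ji}$, summing over $i$ and relabelling the dummy indices $i\leftrightarrow j$ reproduces exactly $\sum_{i,j}\beta_{ji}\gamma_{ij}$.

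Finally, Proposition \ref{TQFT} already established that $X$ and $Y$ are mutually inverse, that is $X\cdot Y=E_{p}$. Hence $F(S^{1})=\mathrm{tr}(X\cdot Y)=\mathrm{tr}(E_{p})=p$, as claimed. I expect the only delicate point to be the index bookkeeping: one must track how the isomorphisms $H$ and $L$ linearize a $p\times p$ matrix into a row, respectively a column, of length $p^{2}$, and verify that the induced contraction pairs $\beta_{ji}$ with $\gamma_{ij}$ rather than with $\gamma_{ji}$. It is exactly this transposition, forced by the twist hidden in $\overline{\mathcal{B}}=\mathcal{B}\circ\tau_{-+}$, that converts the naive Frobenius pairing into an honest trace and so makes the snake identity $X\cdot Y=E_{p}$ applicable; without the twist the composition $\mathcal{B}\circ\mathcal{C}$ would not even be well-typed, since the orientations $-+$ and $+-$ do not match.
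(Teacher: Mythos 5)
Your proof is correct and follows essentially the same route as the paper: both decompose the circle as $\mathcal{B}\circ\tau_{-+}\circ\mathcal{C}$ (you merely absorb the twist into $\overline{\mathcal{B}}$ via equation~(\ref{Eq_12}) rather than into $\mathcal{C}$ via equation~(\ref{Eq_11})), and both reduce the resulting contraction to $\mathrm{tr}(X\cdot Y)=\mathrm{tr}(E_{p})=p$ using the snake identity from Proposition~\ref{TQFT}. Your explicit identification of the scalar as a trace, and the remark that the twist is what makes the composite well-typed, are accurate and match the paper's computation entry for entry.
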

\begin{proof} Let us look at $S^{1}$ as the composition $\mathcal{B} \circ
\tau_{-,+} \circ \mathcal{C}$.

\bigskip

\begin{center}
\begin{picture}(160,110)
\put(45,65){\circle{40}} \put(65,65){\vector(0,0){0}}

\put(80,60){$=$}

\put(160,50){\vector(-2,1){59}} \put(100,50){\line(2,1){59}}
\put(102,51){\vector(-2,-1){3}} \put(103,80){$-$}
\put(150,80){$+$} \put(103,43){$+$} \put(150,43){$-$}

\qbezier(100,89)(130,124)(160,89)
\put(129,106){\vector(1,0){3}}

\qbezier(100,40)(130,5)(160,40) \put(131,23){\vector(1,0){3}}

\put(127,10){$\mathcal{B}$} \put(127,112){$\mathcal{C}$}
\put(10,60){$S^{1}$}

\end{picture}
\end{center} Thus we see that $F(S^{1})=F(\mathcal{B})\cdot F(\tau_{-+}) \cdot
F(\mathcal{C})$, i.e.~\beaz F(S^{1})=\left[\!\!
\begin{array}{cccccccccccccccc} \beta_{11} & \ldots & \beta_{1p} &\big{|} & \beta_{21} & \ldots & \beta_{2p} &\big{|} & \ldots &\big{|} & \beta_{p1} & \ldots & \beta_{pp}\\
\end{array} \!\!\right] \cdot S_{pp} \cdot\,\,\\
\left[\!\!
\begin{array}{cccccccccccccccc} \gamma_{11} & \ldots & \gamma_{1p} &\big{|} & \gamma_{21} & \ldots & \gamma_{2p} &\big{|} & \ldots &\big{|} & \gamma_{p1} & \ldots & \gamma_{pp}\\
\end{array} \!\!\right]^{T}\stackrel{\mathrm{(\ref{Eq_11})}}{=\joinrel=\joinrel=}\,\,\\
\left[\!\!
\begin{array}{cccccccccccccccc} \beta_{11} & \ldots & \beta_{1p} &\big{|} & \beta_{21} & \ldots & \beta_{2p} &\big{|} & \ldots &\big{|} & \beta_{p1} & \ldots & \beta_{pp}\\
\end{array} \!\!\right] \cdot \,\,\\
\left[\!\!
\begin{array}{cccccccccccccccc} \gamma_{11} & \ldots & \gamma_{p1} &\big{|} & \gamma_{12} & \ldots & \gamma_{p2} &\big{|} & \ldots &\big{|} & \gamma_{1p} & \ldots & \gamma_{pp}\\
\end{array} \!\!\right]^{T}=\,\,\\
\underbrace{\beta_{11}\gamma_{11}+\beta_{12}\gamma_{21}+\ldots+\beta_{1p}\gamma_{p1}}_{1}+\underbrace{\beta_{21}\gamma_{12}+\beta_{22}\gamma_{22}+\ldots+\beta_{2p}\gamma_{p2}}_{1}+
\ldots\,\,\\
+\underbrace{\beta_{p1}\gamma_{1p}+\beta_{p2}\gamma_{2p}+\ldots+\beta_{pp}\gamma_{pp}}_{1}\stackrel{\mathrm{(\ref{sistem})}}{=\joinrel=\joinrel=}p\eeaz

\end{proof}

We can rewrite the matrix equation $X \cdot Y=E_{p}$ (see for instance \cite[Section 2.8]{J1}, \cite{V}) as $H(X)\cdot(E_{p} \otimes Y)=H(E_{p}), \mbox{i.e.}$~\bea
\label{Eq beta} \left[\!\!
\begin{array}{cccccccccccccccc} \beta_{11} & \ldots & \beta_{1p} &\big{|} & \beta_{21} & \ldots & \beta_{2p} & \big{|} & \ldots &\big{|} & \beta_{p1} & \ldots & \beta_{pp}\\
\end{array} \!\!\right]\cdot (E_{p} \otimes Y)= \,\, \nonumber \\
\left[\!\!
\begin{array}{cccccccccccccccc} 1 & 0 & \ldots & 0 &\big{|} & 0 & 1 &\ldots & 0 &\big{|} & \ldots & \big{|} & 0 & 0 & \ldots & 1\\
\end{array} \!\!\right],
\eea as well as $(X \otimes E_{p})\cdot L(Y)=L(E_{p}),
\mbox{i.e.}$~\bea
\label{Eqgama1} (X \otimes
E_{p})\cdot\left[\!\!
\begin{array}{cccccccccccccccc} \gamma_{11} & \ldots & \gamma_{1p} &\big{|} & \gamma_{21} & \ldots & \gamma_{2p} &\big{|} & \ldots &\big{|} & \gamma_{p1} & \ldots & \gamma_{pp}\\
\end{array} \!\!\right]^{T}= \,\, \nonumber \\
\left[\!\!
\begin{array}{cccccccccccccccc} 1 & 0 & \ldots & 0 & \big{|} & 0 & 1 &\ldots & 0 & \big{|} & \ldots & \big{|} & 0 & 0 & \ldots & 1\\
\end{array} \!\!\right]^{T}.
\eea Since the Brauerian representation assigns the matrices $\hfill\hfill\hfill\hfill\hfill\hfill\hfill\hfill\hfill\hfill\hfill\hfill\hfill\hfill\hfill$
$\left[\!\!
\begin{array}{cccccccccccccccc} 1 & 0 & \ldots & 0 &\big{|} & 0 & 1 &\ldots & 0 &\big{|} & \ldots & \big{|}& 0 & 0 & \ldots & 1\\
\end{array} \!\!\right]$
and\\ $\left[\!\!
\begin{array}{cccccccccccccccc} 1 & 0 & \ldots & 0 & \big{|} & 0 & 1 &\ldots & 0 & \big{|} & \ldots & \big{|} & 0 & 0 & \ldots & 1\\
\end{array} \!\!\right]^{T}$ to the cobordisms $\mathcal{B}$ and $\mathcal{C}$, respectively, we conclude that every strict $1$-TQFT on $\mathcal{B}$ and $\mathcal{C}$ coincides with the Brauerian representation up to multiplication by invertible matrices.

\begin{prop}\label{TQFT3}{
Let $F:1Cob \rightarrow Mat_{\mathbb{K}}$ be a strict $1$-TQFT such that $F(+)=p\geq 2$, and $B:1Cob \rightarrow Mat_{\mathbb{K}}$ be the Brauerian representation. Then, there is a monoidal natural isomorphism $\theta: B \Rightarrow F$.}
\end{prop}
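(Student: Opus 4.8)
The plan is to build the isomorphism out of a single pair of invertible $p\times p$ matrices, one for each orientation of a point, and then extend it monoidally. Concretely, I would choose invertible matrices $P,Q\in\mathcal{M}_{p\times p}$ and declare $\theta_{+}=P$, $\theta_{-}=Q$, $\theta_{\emptyset}=E_{1}$, and for an arbitrary object $a_{1}\ldots a_{n}$ of $1Cob$ set $\theta_{a_{1}\ldots a_{n}}=\theta_{a_{1}}\otimes\cdots\otimes\theta_{a_{n}}$ (Kronecker product). Because $1Cob$ and $Mat_{\mathbb{K}}$ are strict monoidal and $B,F$ agree on objects (both send $a_{1}\ldots a_{n}$ to $p^{n}$), this $\theta$ is a well-defined family of $p^{n}\times p^{n}$ matrices, each invertible as a Kronecker product of invertible factors, and it satisfies the coherence conditions $\theta_{\emptyset}=E_{1}$ and $\theta_{W\otimes W'}=\theta_{W}\otimes\theta_{W'}$ for a monoidal transformation by construction. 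It then remains to fix $P,Q$ so that every naturality square commutes, i.e.\ $F(K)\cdot\theta_{W}=\theta_{W'}\cdot B(K)$ for every $K:W\to W'$.

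The key reduction is that naturality need only be checked on the generating cobordisms $id_{+},id_{-},\mathcal{B},\overline{\mathcal{B}},\mathcal{C},\overline{\mathcal{C}},S^{1}$ and the transposition $\tau_{-+}$, since every morphism of $1Cob$ is obtained from these by $\circ$ and $\otimes$, and naturality is stable under both operations. Stability under composition is immediate; stability under $\otimes$ follows from strict monoidality of $B,F$ together with the mixed-product law $(AC)\otimes(BD)=(A\otimes B)(C\otimes D)$ and the definition $\theta_{W\otimes W'}=\theta_{W}\otimes\theta_{W'}$. The squares for $id_{\pm}$ and $S^{1}$ are trivial ($E_{p}P=PE_{p}$, respectively $p\cdot 1=1\cdot p$ using $F(S^{1})=B(S^{1})=p$ from Proposition~\ref{TQFT2}), and the square for $\tau_{-+}$ reduces to the intertwining identity $S_{pp}(Q\otimes P)=(P\otimes Q)S_{pp}$ of the commutation matrix, which holds for all $P,Q$; the squares for $\overline{\mathcal{B}}=\mathcal{B}\circ\tau_{-+}$ and $\overline{\mathcal{C}}=\tau_{-+}\circ\mathcal{C}$ then follow by composition.

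Thus the only genuine constraints come from $\mathcal{B}$ and $\mathcal{C}$. Using the row- and column-vectorization identities
\[
H(X)\,(P\otimes Q)=H(P^{T}XQ),\qquad (Q\otimes P)\,L(E_{p})=L(QP^{T}),
\]
together with the Brauerian values $B(\mathcal{B})=H(E_{p})$ and $B(\mathcal{C})=L(E_{p})$ recorded just before the statement, the square for $\mathcal{B}$ becomes $P^{T}XQ=E_{p}$ and the square for $\mathcal{C}$ becomes $Y=QP^{T}$. Since the proof of Proposition~\ref{TQFT} already established that $X$ and $Y$ are mutually inverse, these two conditions are equivalent, and both are met by the simplest choice $P=E_{p}$, $Q=Y$: indeed $P^{T}XQ=XY=E_{p}$ and $QP^{T}=Y$. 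With this choice every naturality square commutes, so $\theta$ is a monoidal natural transformation, and invertibility of each component makes it the desired isomorphism $\theta:B\Rightarrow F$.

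I expect the main obstacle to be verifying the two vectorization identities with the correct index bookkeeping and, above all, recognizing that the resulting equations $P^{T}XQ=E_{p}$ and $Y=QP^{T}$ are compatible. This compatibility is exactly where the earlier fact $XY=YX=E_{p}$ is indispensable: without it, no single monoidal $\theta$ could simultaneously straighten $\mathcal{B}$ and $\mathcal{C}$.
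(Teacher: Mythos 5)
Your proposal is correct and follows essentially the same route as the paper: the same components $\theta_{+}=E_{p}$, $\theta_{-}=X^{-1}=Y$ extended by Kronecker products, the same reduction of naturality to the generating cobordisms, and the same two vectorization identities (the paper's equations for $H(X)\cdot(E_{p}\otimes Y)$ and $(X\otimes E_{p})\cdot L(Y)$) together with the commutation-matrix intertwining law for $\overline{\mathcal{B}}$ and $\overline{\mathcal{C}}$. The only difference is presentational: you derive the choice of $\theta_{\pm}$ by solving the constraints, whereas the paper posits it and verifies.
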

\begin{proof} Let us assign to each object $a$ of $1Cob$ an invertible morphism $\theta_{a}:B(a)\rightarrow F(a)$ of
$Mat_{\mathbb{K}}$, i.e.~an invertible matrix in the following way. We first define
$\theta_{\emptyset}:1\rightarrow 1$, $\theta_{+}:p \rightarrow
p$ and $\theta_{-}:p \rightarrow p$ to be $E_{1}$, $E_{p}$ and
$X^{-1}$, respectively. Then, for every object $a=a_{1}\ldots a_{n}$ of $1Cob$ we define $\theta_{a}$ by the Kronecker product $\theta_{a_{1}} \otimes \ldots \otimes
\theta_{a_{n}}$. We proceed to show that for every morphism $f:a\rightarrow a'$ of $1Cob$ the following diagram commutes in $Mat_{\mathbb{K}}$
\begin{center}
\begin{picture}(120,60)(0,-5)

\put(0,40){\makebox(0,0){$B(a)$}}
\put(0,0){\makebox(0,0){$B(a')$}}
\put(100,40){\makebox(0,0){$F(a)$}}
\put(100,0){\makebox(0,0){$F(a')$}}

\put(50,47){\makebox(0,0){$\theta_{a}$}}
\put(50,-7){\makebox(0,0){$\theta_{a'}$}}
\put(-20,20){\makebox(0,0){$B(f)$}}
\put(118,20){\makebox(0,0){$F(f)$}}

\put(20,40){\vector(1,0){60}} \put(20,0){\vector(1,0){60}}
\put(0,30){\vector(0,-1){20}} \put(100,30){\vector(0,-1){20}}

\end{picture}
\end{center}

It suffices to prove for generators $id_{+}$, $id_{-}$,
$S^{1}$, $\mathcal{B}$, $\mathcal{C}$, $\overline{\mathcal{B}}$
and $\overline{\mathcal{C}}$. From $B(id_{+})=F(id_{+})=E_p$ and
$B(id_{-})=F(id_{-})=E_p$, we obtain $\theta_+ \cdot E_p=E_p \cdot
\theta_+$ and $\theta_- \cdot E_p=E_p \cdot \theta_-$. We have
$\theta_\emptyset \cdot B(S^{1})=F(S^{1}) \cdot
\theta_\emptyset $ because $1 \cdot p=p \cdot 1$. The following diagrams commute
\begin{center}
\begin{picture}(370,60)(0,-5)

\put(0,50){\makebox(0,0){$B(+-)$}}
\put(0,10){\makebox(0,0){$B(\emptyset)$}}
\put(100,50){\makebox(0,0){$F(+-)$}}
\put(100,10){\makebox(0,0){$F(\emptyset)$}}

\put(53,57){\makebox(0,0){$\theta_{+-}$}}
\put(-22,30){\makebox(0,0){$B(\stackrel{+\ \!-}{\cup})$}}
\put(122,30){\makebox(0,0){$F(\stackrel{+\ \!-}{\cup})$}}

\put(30,50){\vector(1,0){40}} \put(50,10){\makebox(0,0){$=$}}
\put(0,40){\vector(0,-1){20}} \put(100,40){\vector(0,-1){20}}

\put(220,50){\makebox(0,0){$B(\emptyset)$}}
\put(220,10){\makebox(0,0){$B(-+)$}}
\put(320,50){\makebox(0,0){$F(\emptyset)$}}
\put(320,10){\makebox(0,0){$F(-+)$}}

\put(273,0){\makebox(0,0){$\theta_{-+}$}}
\put(198,30){\makebox(0,0){$B(\underset{-+}{\cap})$}}
\put(342,30){\makebox(0,0){$F(\underset{-+}{\cap})$}}

\put(270,50){\makebox(0,0){$=$}} \put(250,10){\vector(1,0){40}}
\put(220,40){\vector(0,-1){20}} \put(320,40){\vector(0,-1){20}}

\end{picture}
\end{center}
\noindent which follows from $B(\stackrel{+\
\!-}{\cup})\stackrel{\mathrm{(\ref{Eq
beta})}}{=\joinrel=\joinrel=}F(\stackrel{+\ \!-}{\cup})\cdot
(E_{p}\otimes X^{-1})=F(\stackrel{+\ \!-}{\cup})\cdot
(\theta_+\otimes \theta_-)=F(\stackrel{+\
\!-}{\cup})\cdot\theta_{+-}$ and

$$F(\underset{-+}{\cap})\stackrel{\mathrm{(\ref{Eqgama1})}}{=\joinrel=\joinrel=}(X^{-1} \otimes E_{p})\cdot
B(\underset{-+}{\cap})=(\theta_- \otimes \theta_+)\cdot
B(\underset{-+}{\cap})=\theta_{-+}\cdot
B(\underset{-+}{\cap}).$$
Using the key property of the commutation matrix that enables us to interchange the two matrices of a Kronecker product,
we can see that \beaz B(\stackrel{-\
\!+}{\cup})=B(\stackrel{+\ \!-}{\cup})\cdot
S_{pp}=F(\stackrel{+\ \!-}{\cup})\cdot (E_{p} \otimes
X^{-1})\cdot S_{pp}=\,\,\\F(\stackrel{+\ \!-}{\cup})\cdot
S_{pp}\cdot (X^{-1}\otimes E_{p})= F(\stackrel{-\
\!+}{\cup})\cdot (X^{-1}\otimes E_{p})=F(\stackrel{-\
\!+}{\cup})\cdot \theta_{-+}\eeaz \beaz
F(\underset{+-}{\cap})=S_{pp}\cdot F(\underset{-+}{\cap})
=S_{pp}\cdot (X^{-1} \otimes E_{p})\cdot
B(\underset{-+}{\cap})=\,\,\\=(E_{p} \otimes X^{-1})\cdot
S_{pp} \cdot B(\underset{-+}{\cap})=\theta_{+-}\cdot
B(\underset{+-}{\cap}).\eeaz

\end{proof}

A direct consequence of this last result and the faithfulness of the Brauerian representation is as follows.
\begin{cor}\label{TQFT4}{
Every strict $1$-TQFT, $F:1Cob \rightarrow Mat_{\mathbb{K}}$, such that $F(+)=p\geq 2$, is faithful.}
\end{cor}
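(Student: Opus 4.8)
The plan is to derive the faithfulness of $F$ as a purely formal consequence of two facts already in hand: the faithfulness of the Brauerian functor $B$ (Proposition \ref{vernost}) and the existence of a monoidal natural isomorphism $\theta: B \Rightarrow F$ (Proposition \ref{TQFT3}). The underlying principle is that any functor naturally isomorphic to a faithful functor is again faithful, so no further geometric analysis of cobordisms or explicit matrix computation is needed.

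Carrying this out, I would take two parallel morphisms $K, L: a \rightarrow a'$ of $1Cob$ and assume $F(K) = F(L)$, aiming to conclude that $K$ and $L$ are equivalent. Naturality of $\theta$ applied at $K$ and at $L$ gives the identities $\theta_{a'} \cdot B(K) = F(K) \cdot \theta_{a}$ and $\theta_{a'} \cdot B(L) = F(L) \cdot \theta_{a}$ in $Mat_{\mathbb{K}}$, where juxtaposition denotes matrix multiplication. Since the right-hand sides coincide by assumption, so do the left-hand sides, whence $\theta_{a'} \cdot B(K) = \theta_{a'} \cdot B(L)$.

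The decisive step is then to cancel $\theta_{a'}$. Because $\theta$ is a natural \emph{isomorphism}, each component $\theta_{a}$ is invertible: it is defined as a Kronecker product $\theta_{a_{1}} \otimes \ldots \otimes \theta_{a_{n}}$ of the invertible matrices $E_{1}$, $E_{p}$ and $X^{-1}$, and a Kronecker product of invertible matrices is again invertible. Left-multiplying by $\theta_{a'}^{-1}$ therefore yields $B(K) = B(L)$, and Proposition \ref{vernost} forces $K$ and $L$ to be equivalent cobordisms, which is exactly the claim. There is no genuine obstacle here beyond recording the invertibility of the components $\theta_{a}$, which is what legitimizes the cancellation; all of the real work was already carried out in establishing the natural isomorphism $\theta$ in Proposition \ref{TQFT3}.
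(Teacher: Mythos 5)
Your argument is correct and is exactly the route the paper takes: Corollary \ref{TQFT4} is stated as a direct consequence of the natural isomorphism $\theta: B \Rightarrow F$ from Proposition \ref{TQFT3} and the faithfulness of $B$ from Proposition \ref{vernost}, and your write-up simply makes explicit the standard cancellation of the invertible components $\theta_{a}$ that the paper leaves implicit.
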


\section{Strong $1$-dimensional Topological Quantum Field Theories}

The category $Vect_{\mathbb{K}}$ of finite dimensional vector spaces over a fixed field $\mathbb{K}$ with ordinary tensor product $\otimes$ and $1$-dimensional vector space $\mathbb{K}$ as the unit is a symmetric monoidal, but not a strict monoidal. By the universal property of the tensor product, there is a unique isomorphism
$\alpha_{V,W,U}: V \otimes (W \otimes U) \cong (V \otimes W)
\otimes U$, such that $v\otimes (w \otimes u)\mapsto (v \otimes
w)\otimes u$. The structural isomorphism $\lambda_{V}: \mathbb{K}
\otimes V \cong V$ and $\rho_{V}: V \otimes \mathbb{K} \cong V$
are given by $a \otimes v\mapsto a v$ and $v \otimes a\mapsto
a v$, respectively. The symmetry is brought by $\sigma_{V,W}:V \otimes W \cong W \otimes V$
defined by $v \otimes  w\rightarrow w \otimes v$.

\begin{defn} Strong $1$-TQFT is a strong symmetric monoidal functor $\hfill\hfill\hfill\hfill\hfill\hfill$ $(F,F_{0},F_{2})$ between the strict symmetric monoidal category $(1Cob, \otimes, \emptyset, \tau_{n,m})$ and the non-strict symmetric monoidal category $(Vect_{\mathbb{K}}, \otimes, \mathbb{K},
\alpha, \lambda, \rho, \sigma_{V,W})$ (for the notion of strong monoidal functor see \cite{ML1}).
\end{defn}

For every closed oriented null-dimensional manifold, regarded as a sequence of points $a=a_{1} \ldots a_{n}$, where $a_{i} \in \{+,-\}$, the functor $F$ assigns a vector space $F(a)$, and for every oriented $1$-cobordism $K$ from
$a$ to $b$ it assigns a linear map $F(K):F(a) \rightarrow F(b)$.
The components of a natural transformation $F_{2}$ are isomorphisms $F_{2}(a,b):
F(a) \otimes F(b) \xrightarrow{\cong} F(ab)$, and $F_{0}: \mathbb{K} \xrightarrow{\cong}
F(\emptyset)$ is also an isomorphism of $Vect_{\mathbb{K}}$. Together, they must make the diagrams involving the structural maps $\alpha$, $\lambda$, $\rho$, and $\sigma$ commute in $Vect_{\mathbb{K}}$ (see \cite{ML1}).

Our goal is to prove that every strong $1$-TQFT, $F:1Cob \rightarrow
Vect_{\mathbb{K}}$, mapping $a_{i} \in \{+,-\}$ to a vector space of dimension at least $2$, is faithful.
For this purpose, let us consider the category $VectB_{\mathbb{K}}$ whose objects are the ordered pairs $(V,e)$, where $V$ is a finite dimensional vector space, and $e$ is a chosen ordered basis for $V$. The morphisms $(V,e)\rightarrow(V',e')$ are just the usual linear maps $V\rightarrow V'$. If $e=[e_{0}, \ldots, e_{n-1}]$ and $f=[f_{0}, \ldots, f_{m-1}]$ are ordered bases for $V$ and $W$, respectively, let $e \otimes f$ denote the ordered basis for $V \otimes W$, having vector $\varphi(e_{i},f_{j})$ at the $[i\cdot m+ j]$-th position, where $\varphi:V
\times W\rightarrow V \otimes W$ is canonical bilinear map. The monoidal structure of $VectB_{\mathbb{K}}$ is given on objects by $$(V,e) \otimes (W,f)=(V \otimes W, e \otimes f),$$ with $(\mathbb{K},1_{\mathbb{K}})$ serving as the unit. The tensor product of two morphisms is defined in the same way as in $Vect_{\mathbb{K}}$.

We now proceed to introduce a new functor $F^{*}: 1Cob \rightarrow
VectB_{\mathbb{K}}$. It is recursively defined on objects in the following way, while it coincides with $F$ on morphisms.

\noindent
The image of $1_{\mathbb{K}}$ under the isomorphism $F_{0}:\mathbb{K} \xrightarrow{\cong}
F(\emptyset)$ is taken to be the basis of $F(\emptyset)$, so we set $F^{*}(\emptyset)=(F(\emptyset), F_{0}(1_{\mathbb{K}}))$.

Once we have chosen bases $e_{+}$ and $e_{-}$ for the spaces $F(+)$ and $F(-)$, respectively, we define $F^{*}(+)=(F(+),e_{+})$ and
$F^{*}(-)=(F(-),e_{-})$.
If $a=a_{1} \ldots a_{n}$ is an object of length $n$, we define $F^{*}(a_{1} \ldots a_{n})=(F(a_{1} \ldots a_{n}), e_{a_{1}
\ldots a_{n}})$, where a basis $e_{a_{1} \ldots a_{n}}$ for the space $F(a_{1} \ldots a_{n})$ is obtained by taking the image of an ordered
basis of domain under the isomorphism $$F_{2}(a_{1}, a_{2} \ldots a_{n}): F(a_{1}) \otimes
F(a_{2}\ldots a_{n})\rightarrow F(a_{1} \ldots a_{n}).$$ More precisely, if
$e_{a_{1}}$ is a given basis for $F(a_{1})$, and $e_{a_{2} \ldots a_{n}}$ is recursively defined basis for $F(a_{2} \ldots a_{n})$, then
the basis for the space $F(a_{1} \ldots a_{n})$ is taken to be the image of basis $e_{a_{1}} \otimes
e_{a_{2} \ldots a_{n}}$ under the isomorphism $F_{2}$.

Our next task is to prove that $F^{*}$ is faithful.
To do this, take a dimensional functor $G:VectB_{\mathbb{K}} \rightarrow
Mat_{\mathbb{K}}$, sending a vector space to its dimension, and a linear map to its matrix with respect to the chosen ordered bases
$$G(V,e)=\dim V,$$
$$G(L:(V,e)\rightarrow (V',e'))= [L]_{e,e'}.$$

\begin{lem}\label{Galpha}
{Let $e=[e_{0}, \ldots, e_{n-1}]$, $f=[f_{0}, \ldots,
f_{m-1}]$ and $g=[g_{0}, \ldots, g_{k-1}]$ be bases for $U$, $V$ and $W$, respectively.
If $\alpha:(U,e)\otimes ((V,f)\otimes
(W,g))\rightarrow ((U,e)\otimes (V,f))\otimes (W,g)$ is defined by
$$\alpha(e_{i} \otimes (f_{j} \otimes g_{h}))=(e_{i} \otimes f_{j}) \otimes g_{h},$$
then
$$G(\alpha)=E_{n\cdot m
\cdot k},$$ where $E_{n\cdot m \cdot k}$ is the identity matrix of order $n\cdot m \cdot k$.}
\end{lem}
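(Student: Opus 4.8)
The plan is to show that $\alpha$ carries the $s$-th vector of the domain basis to the $s$-th vector of the codomain basis for every index $s$, so that its matrix with respect to these chosen bases is the identity. Everything reduces to tracking positions under the positional convention fixed in the definition of $VectB_{\mathbb{K}}$, namely that $e\otimes f$ places $\varphi(e_{i},f_{j})$ at the $[i\cdot m+j]$-th position.

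First I would compute the position of the domain basis vector $e_{i}\otimes(f_{j}\otimes g_{h})$ in the ordered basis $e\otimes(f\otimes g)$. The inner basis $f\otimes g$ for $V\otimes W$, which has dimension $m\cdot k$, places $f_{j}\otimes g_{h}$ at position $j\cdot k+h$. Applying the convention once more with the outer factor $(U,e)$ of dimension $n$, the vector $e_{i}\otimes(f_{j}\otimes g_{h})$ sits at position $i\cdot(mk)+(j\cdot k+h)$.

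Next I would compute the position of the codomain basis vector $(e_{i}\otimes f_{j})\otimes g_{h}$ in the ordered basis $(e\otimes f)\otimes g$. Here the inner basis $e\otimes f$ for $U\otimes V$, of dimension $n\cdot m$, places $e_{i}\otimes f_{j}$ at position $i\cdot m+j$, and then tensoring on the right with $(W,g)$ puts $(e_{i}\otimes f_{j})\otimes g_{h}$ at position $(i\cdot m+j)\cdot k+h$.

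The crucial observation is that these two positions coincide, since by distributivity $i\cdot(mk)+j\cdot k+h=(i\cdot m+j)\cdot k+h$. Because $\alpha$ is defined precisely by $e_{i}\otimes(f_{j}\otimes g_{h})\mapsto(e_{i}\otimes f_{j})\otimes g_{h}$, it sends the domain basis vector at position $s=i\cdot mk+j\cdot k+h$ to the codomain basis vector at the very same position $s$, as $(i,j,h)$ range over $n\times m\times k$ and hence $s$ ranges over $\{0,\ldots,nmk-1\}$. Thus each column of $[\alpha]_{e\otimes(f\otimes g),\,(e\otimes f)\otimes g}$ has a single $1$ on the diagonal and zeros elsewhere, giving $G(\alpha)=E_{n\cdot m\cdot k}$. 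The only point requiring care is the bookkeeping of the nested positional encoding; once the two mixed-radix expressions are seen to agree by distributivity, no further computation is needed.
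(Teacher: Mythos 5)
Your proposal is correct and follows essentially the same route as the paper: both compute the position $i\cdot(mk)+j\cdot k+h$ of $e_{i}\otimes(f_{j}\otimes g_{h})$ in $e\otimes(f\otimes g)$ and the position $(i\cdot m+j)\cdot k+h$ of $(e_{i}\otimes f_{j})\otimes g_{h}$ in $(e\otimes f)\otimes g$, observe they coincide, and conclude the matrix of $\alpha$ is the identity. No gaps.
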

\begin{proof} The vector $e_{i}\otimes (f_{j} \otimes g_{h})$ is the $[i \cdot(mk)+j
\cdot k +h]$-th element of the basis $e\otimes (f \otimes g)$ for the space $U \otimes (V \otimes W)$, and
the vector $(e_{i} \otimes f_{j}) \otimes g_{h}$ is the $[(i \cdot m +j)\cdot k+h]$-th element of
the basis $(e \otimes f) \otimes g$ for the space $(U \otimes V) \otimes W$. The image under $\alpha$ of the $l$-th basis vector
$e_{i}\otimes (f_{j} \otimes g_{h})$ is the $l$-th basis vector $(e_{i} \otimes f_{j}) \otimes g_{h}$. Thus,
the matrix representation of $\alpha$ with respect to bases $e\otimes (f \otimes g)$ and $(e \otimes f) \otimes g$ is the identity matrix
of the appropriate order.
\end{proof}

\begin{lem}\label{jedmat}
{Given any objects $a$ and $b$ of $1Cob$, we have
$$G(F_{2}(a,b): F(a)\otimes F(b) \xrightarrow{\cong} F(ab))=E.$$ }
\end{lem}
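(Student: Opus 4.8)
The plan is to induct on the length $n$ of $a = a_1 \ldots a_n$, keeping $b$ arbitrary. The statement $G(F_2(a,b)) = E$ says precisely that, read through the chosen ordered bases, $F_2(a,b)$ sends the tensor basis $e_a \otimes e_b$ of $F(a) \otimes F(b)$ to the recursively defined basis $e_{ab}$ of $F(ab)$ position by position, since $G$ records the matrix of a map relative to those bases.

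For $n = 0$, so $a = \emptyset$, recall that $F^{*}(\emptyset) = (F(\emptyset), F_0(1_{\mathbb{K}}))$. The left-unit coherence of the strong monoidal functor, together with strictness of $1Cob$, gives $F_2(\emptyset, b) \circ (F_0 \otimes \mathrm{id}) = \lambda_{F(b)}$; hence $F_2(\emptyset, b)(F_0(1_{\mathbb{K}}) \otimes v) = \lambda_{F(b)}(1_{\mathbb{K}} \otimes v) = v$ for all $v \in F(b)$. Each basis vector $F_0(1_{\mathbb{K}}) \otimes (e_b)_h$ is thus sent to $(e_b)_h$, and since $\emptyset b = b$ this is the $h$-th vector of $e_{\emptyset b}$, so $G(F_2(\emptyset, b)) = E$.

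For the step I would write $a = a_1 a'$ with $a' = a_2 \ldots a_n$, and invoke the associativity coherence of $(F, F_0, F_2)$ for the triple $(a_1, a', b)$, which, using strictness of $1Cob$, reads
\[ F_2(a, b) \circ (F_2(a_1, a') \otimes \mathrm{id}) \circ \alpha = F_2(a_1, a'b) \circ (\mathrm{id} \otimes F_2(a', b)), \]
with $\alpha = \alpha_{F(a_1), F(a'), F(b)}$. Applying $G$ termwise --- each arrow being a $VectB_{\mathbb{K}}$-morphism between the intended based spaces, with $G$ turning composition into matrix product and $\otimes$ into the Kronecker product --- the left-hand side collapses: $G(\alpha) = E$ by Lemma \ref{Galpha}, and $G(F_2(a_1, a') \otimes \mathrm{id}) = E \otimes E = E$ because $F_2(a_1, a')$ realizes the very definition of the basis $e_a$. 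On the right, $G(F_2(a_1, a'b)) = E$ since $a_1$ is a single letter and this map realizes the definition of $e_{a_1(a'b)} = e_{ab}$, while $G(\mathrm{id} \otimes F_2(a', b)) = E$ by the induction hypothesis applied to the shorter first factor $a'$. Comparing the two sides yields $G(F_2(a, b)) = E$.

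The calculations above are routine; the point needing care is the legitimacy of passing from the coherence identity, which lives in $Vect_{\mathbb{K}}$, to a matrix identity in $Mat_{\mathbb{K}}$ --- that is, checking that each factor is genuinely a morphism of $VectB_{\mathbb{K}}$ between the spaces carrying the intended ordered bases, so that $G$ may be applied factor by factor and sends $\otimes$ to the Kronecker product. Granting this, Lemma \ref{Galpha} absorbs the reassociation and the induction closes. Should one wish to sidestep the monoidality of $G$, the identical conclusion follows by tracking a single basis vector $(e_{a_1})_i \otimes ((e_{a'})_j \otimes (e_b)_h)$ along both composites and verifying that the two resulting indices into $e_{ab}$ coincide, which reduces to the arithmetic identity $\dim F(a'b) = \dim F(a') \cdot \dim F(b)$.
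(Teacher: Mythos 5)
Your proof is correct and follows essentially the same route as the paper's: induction on the length of $a$, with the single-letter instance of $F_2$ handled by the definitional observation that $F_2(a_1,c)$ carries $e_{a_1}\otimes e_c$ position by position onto the recursively defined basis $e_{a_1c}$, and the inductive step absorbed by the associativity coherence identity together with Lemma \ref{Galpha}. The only (cosmetic) difference is that you anchor the induction at $a=\emptyset$ via the left-unit coherence while the paper starts at length $1$; just note that when $a$ has length $1$ your step invokes $G(F_2(a_1,\emptyset))=E$, which rests on the right-unit coherence rather than on ``the very definition of the basis $e_a$'', since $e_{a_1}$ is chosen directly and not produced by $F_2$.
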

\begin{proof} The proof is by induction on the length of the object $a$.
Base case:
Let $a_{1}\in \{+,-\}$ be an object of length $1$ and let $b=a_{2}\ldots a_{n}$ be an object of an arbitrary length.
Fix ordered basis $e_{a_{1}}=[e_{0},\ldots, e_{p-1}]$, $p\geq 2$, and $e_{a_{2}\ldots a_{n}}=[f_{0}, \ldots, f_{m-1}]$ for $F(a_{1})$ and
$F(a_{2}\ldots a_{n})$, respectively.
Then the corresponding basis $[g_{0},\ldots, g_{pm-1}] $ for $F(a_{1})\otimes
F(a_{2}\ldots a_{n})$ is given by $g_{i\cdot m+j}=e_{i}\otimes f_{j}$.
The matrix representation of $$F_{2}(a_{1}, a_{2}\ldots a_{n}):F(a_{1}) \otimes F(a_{2}\ldots a_{n})\xrightarrow{\cong}
F(a_{1}\ldots a_{n})$$
with respect to the bases $[g_{0},\ldots, g_{pm-1}]$ and $[F_{2}(g_{0}),\ldots, F_{2}(g_{pm-1})]$
is the identity matrix of order $p\cdot m$, i.e. $G(F_{2}(a_{1},b))=E$.
Induction step: Suppose that the claim is true for all objects $a$ of length less than $n$, where $n>1$.
Take an arbitrary object $a=a_{1}\ldots a_{n}$ of $1Cob$.
By the commutativity of the following diagram

\begin{center}
\begin{picture}(120,100)(0,-5)

\put(-10,40){\makebox(0,0){$F(a_{1})\otimes F(a_{2}\ldots a_{n}b)$}}
\put(-5,0){\makebox(0,0){$F(a_{1}(a_{2}\ldots a_{n}b))$}}
\put(130,40){\makebox(0,0){$F(a_{1}a_{2}\ldots a_{n})\otimes F(b)$}}
\put(105,0){\makebox(0,0){$F((a_{1}a_{2}\ldots a_{n})b)$}}
\put(-33,80){\makebox(0,0){$F(a_{1})\otimes (F(a_{2}\ldots a_{n})\otimes F(b))$}}
\put(130,80){\makebox(0,0){$(F(a_{1}) \otimes F(a_{2}\ldots a_{n})) \otimes F(b)$}}

\put(53,87){\makebox(0,0){$\alpha$}}
\put(-40,20){\makebox(0,0){$F_{2}(a_{1}, a_{2}\ldots a_{n}b)$}}
\put(143,20){\makebox(0,0){$F_{2}(a_{1}a_{2}\ldots a_{n}, b)$}}
\put(-43,60){\makebox(0,0){$\id \otimes F_{2}(a_{2}\ldots a_{n},b)$}}
\put(148,60){\makebox(0,0){$F_{2}(a_{1},a_{2}\ldots a_{n}) \otimes \id$}}

\put(35,80){\vector(1,0){30}} \put(50,0){\makebox(0,0){$=$}}
\put(0,30){\vector(0,-1){20}} \put(100,30){\vector(0,-1){20}}
\put(0,70){\vector(0,-1){20}} \put(100,70){\vector(0,-1){20}}

\end{picture}
\end{center}
we have that $$F_{2}(a_{1}a_{2}\ldots a_{n}, b)=$$$$F_{2}(a_{1},a_{2}\ldots a_{n}b)\circ (\id \otimes F_{2}(a_{2}\ldots a_{n}, b)) \circ \alpha^{-1}
\circ (F_{2}(a_{1},a_{2}\ldots a_{n})\otimes \id)^{-1}.$$
Now, by the induction hypothesis and Lemma \ref{Galpha} it follows that $$G(F_{2}(a_{1}a_{2}\ldots a_{n}, b))=E.$$ \end{proof}

\begin{lem}\label{Kroneker}{{(}see \cite[Chapter 11, Proposition 17]{DF1}}{)}\,{
Let $L:(V,e)\rightarrow (V',e')$ and $H:(W,f)\rightarrow
(W',f')$ be linear maps of finite dimensional vector spaces.
Then the Kronecker product of matrices
$[L]_{e,e'}$ and $[H]_{f,f'}$, representing $L$ and $H$,
is equal to the matrix $[L \otimes H]_{e \otimes f, e' \otimes f'}$,
representing $L \otimes H: V
\otimes W\rightarrow V' \otimes W'$, i.e.
$$G(L \otimes H)= G(L) \otimes G(H).$$
}
\end{lem}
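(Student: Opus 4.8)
The plan is to reduce the statement to a direct comparison of matrix entries, exploiting the explicit position conventions that define the ordered bases $e\otimes f$ and $e'\otimes f'$ in $VectB_{\mathbb{K}}$. Since this is the standard correspondence between tensoring of maps and the Kronecker product (as recorded in the cited reference), the real work is purely bookkeeping: checking that the ordering fixed in the definition of $VectB_{\mathbb{K}}$ is exactly the one realizing the Kronecker convention.

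First I would fix dimensions, say $\dim V=n$, $\dim V'=n'$, $\dim W=m$, $\dim W'=m'$, and record the two given matrices through their action on basis vectors: write $[L]_{e,e'}=(a_{ki})$ with $L(e_i)=\sum_{k=0}^{n'-1}a_{ki}\,e'_k$, and $[H]_{f,f'}=(b_{lj})$ with $H(f_j)=\sum_{l=0}^{m'-1}b_{lj}\,f'_l$. By the definition of the tensor product of linear maps, $L\otimes H$ sends the basis vector $e_i\otimes f_j$ of $V\otimes W$ to $L(e_i)\otimes H(f_j)$, and expanding by bilinearity gives $(L\otimes H)(e_i\otimes f_j)=\sum_{k,l}a_{ki}b_{lj}\,(e'_k\otimes f'_l)$.

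Next I would read off the matrix $[L\otimes H]_{e\otimes f,\,e'\otimes f'}$ using the basis convention of $VectB_{\mathbb{K}}$: the vector $e_i\otimes f_j$ occupies position $i\cdot m+j$ in $e\otimes f$, and the vector $e'_k\otimes f'_l$ occupies position $k\cdot m'+l$ in $e'\otimes f'$. Hence the entry of $[L\otimes H]_{e\otimes f,\,e'\otimes f'}$ in row $k\cdot m'+l$ and column $i\cdot m+j$ is precisely $a_{ki}b_{lj}$. On the other hand, by the definition of the Kronecker product, $[L]_{e,e'}\otimes[H]_{f,f'}$ is the block matrix whose $(k,i)$-block is $a_{ki}\,[H]_{f,f'}$, so its entry in row $k\cdot m'+l$ and column $i\cdot m+j$ is likewise $a_{ki}b_{lj}$. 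The two matrices agree entrywise, which is the asserted identity $G(L\otimes H)=G(L)\otimes G(H)$.

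The hard part is essentially invisible but is genuinely the crux of the lemma: verifying that the ordering fixed in the definition of $VectB_{\mathbb{K}}$ (position $i\cdot m+j$) is exactly the ordering that realizes the standard Kronecker convention on both the source side, where the block width is governed by $\dim W=m$, and the target side, where the block height is governed by $\dim W'=m'$. Once the indices $i\cdot m+j$ and $k\cdot m'+l$ are matched correctly, nothing beyond the bilinear expansion above is required; had the two tensor factors been ordered the other way, one would instead obtain the Kronecker product conjugated by commutation matrices of the type $S_{n,m}$ appearing earlier, so the correct alignment of conventions is exactly what makes the clean equality hold.
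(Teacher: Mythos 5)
The paper gives no proof of this lemma at all, citing it directly from Dummit and Foote; your entrywise verification is correct and is exactly the standard argument one would supply in its place. In particular you correctly align the paper's ordering convention (position $i\cdot m+j$ in $e\otimes f$, position $k\cdot m'+l$ in $e'\otimes f'$) with the block structure of the Kronecker product, which is the only point where the claim could conceivably go wrong.
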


\begin{prop}\label{komp}{
The composition $GF^{*}:1Cob \rightarrow Mat_{\mathbb{K}}$ is a strict monoidal functor.}
\end{prop}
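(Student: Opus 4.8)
The plan is to verify directly that $GF^{*}$ meets the three defining requirements of a strict monoidal functor: that it is a functor, that it preserves the monoidal product and the unit strictly on objects, and that it preserves the tensor (Kronecker) product strictly on morphisms. Functoriality is immediate: $F^{*}$ agrees with $F$ on morphisms and $F$ is a functor, while $G$ sends composition to matrix multiplication and identities to identity matrices, so the composite $GF^{*}$ is a functor.

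For objects and the unit the argument is a dimension count. Since $F_{2}(a,b)\colon F(a)\otimes F(b)\to F(ab)$ is an isomorphism, $GF^{*}(ab)=\dim F(ab)=\dim F(a)\cdot\dim F(b)=GF^{*}(a)\cdot GF^{*}(b)$, which is exactly the monoidal product of objects in $Mat_{\mathbb{K}}$. Likewise, because $F_{0}\colon\mathbb{K}\to F(\emptyset)$ is an isomorphism, $GF^{*}(\emptyset)=\dim F(\emptyset)=1$, the unit object of $Mat_{\mathbb{K}}$.

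The substance of the proof, and the step I expect to be the main obstacle, is strictness on morphisms: for $K\colon a\to a'$ and $L\colon b\to b'$ I must show $GF^{*}(K\otimes L)=GF^{*}(K)\otimes GF^{*}(L)$. Here I would exploit the naturality of the coherence map $F_{2}$, which gives $F(K\otimes L)\circ F_{2}(a,b)=F_{2}(a',b')\circ\bigl(F(K)\otimes F(L)\bigr)$, hence $F(K\otimes L)=F_{2}(a',b')\circ\bigl(F(K)\otimes F(L)\bigr)\circ F_{2}(a,b)^{-1}$. Applying $G$ and keeping track of the recursively chosen bases $e_{a}$, $e_{b}$, $e_{ab}$, and so on, the two outer factors become identity matrices by Lemma \ref{jedmat} (using $G(F_{2}(a,b)^{-1})=G(F_{2}(a,b))^{-1}=E$), while the middle factor becomes $G(F(K))\otimes G(F(L))=GF^{*}(K)\otimes GF^{*}(L)$ by Lemma \ref{Kroneker}. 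The delicate point is the bookkeeping of bases: Lemma \ref{jedmat} must be invoked with the domain $F(a)\otimes F(b)$ carrying the basis $e_{a}\otimes e_{b}$ and the codomain $F(ab)$ carrying the basis $e_{ab}$ fixed in the construction of $F^{*}$, so that $F_{2}$ is represented by the identity matrix; this is precisely where the strong-monoidal coherence data of $F$ is absorbed into the choice of bases and thereby trivialized.

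Conceptually, the whole statement reads as the assertion that $F^{*}$ is strong monoidal with structure $(F_{2},F_{0})$ and that $G$ is strict monoidal (its morphism-level strictness being Lemma \ref{Kroneker} and its object-level strictness the multiplicativity of dimension), so that the only possible obstruction to strictness of the composite lies in the structural morphisms $G(F_{2})$ and $G(F_{0})$. Lemmas \ref{jedmat} and \ref{Galpha} show these are identity matrices, which is exactly what forces $GF^{*}$ to be strict rather than merely strong.
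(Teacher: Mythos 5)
Your proposal is correct and follows essentially the same route as the paper's proof: the dimension count handles the unit and the monoidal product on objects, and the naturality square for $F_{2}$ combined with Lemma \ref{jedmat} (to trivialize the outer conjugating factors) and Lemma \ref{Kroneker} (to identify the middle factor with the Kronecker product) handles morphisms. Your additional remark that $G(F_{2}(a,b)^{-1})=G(F_{2}(a,b))^{-1}=E$ makes explicit a step the paper leaves implicit, but the argument is the same.
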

\begin{proof}
It can be easily seen that $GF^{*}$ maps unit to unit
$$(GF^{*})(\emptyset)=G(F(\emptyset), F_{0}(1_{\mathbb{K}}))= \dim(F(\emptyset))=1.$$
For any two objects $a$ and $b$ of $1Cob$ we have

\begin{tabbing}
\hspace{1.5em}$(GF^{*})(ab)$ \= $= G(F(ab),e_{ab})=\dim F(ab)= \dim(F(a)\otimes
F(b))$
\\[1ex]
\> $= \dim(F(a)) \cdot \dim(F(b))= G(F(a),e_{a})\cdot
G(F(b),e_{b})$
\\[1ex]
\> $= G(F^{*}(a)) \cdot G(F^{*}(b))$.
\end{tabbing}

By the naturality of $F_{2}$, the following diagram commutes for every two morphisms
$f:a \rightarrow a'$ and $g:b\rightarrow
b'$
\begin{center}
\begin{picture}(120,60)(0,-5)

\put(0,40){\makebox(0,0){$F(a)\otimes F(b)$}}
\put(0,0){\makebox(0,0){$F(a')\otimes F(b')$}}
\put(100,40){\makebox(0,0){$F(ab)$}}
\put(100,0){\makebox(0,0){$F(a'b')$}}

\put(60,47){\makebox(0,0){$F_{2}(a,b)$}}
\put(60,-12){\makebox(0,0){$F_{2}(a',b')$}}
\put(-25,20){\makebox(0,0){$Ff \otimes Fg$}}
\put(125,20){\makebox(0,0){$F(f \otimes g)$}}

\put(40,40){\vector(1,0){35}} \put(40,0){\vector(1,0){35}}
\put(0,30){\vector(0,-1){20}} \put(100,30){\vector(0,-1){20}}

\end{picture}
\end{center}
It follows that
\begin{tabbing}
\hspace{1.5em}$(GF^{*})(f\otimes g)$ \= $= G(F(f\otimes g))=
G(F_{2}(a',b')\circ (F(f) \otimes F(g)) \circ F_{2}^{-1}(a,b))$
\\[1ex]
\> $= G(F_{2}(a',b')) \cdot G(F(f) \otimes F(g)) \cdot
G(F_{2}^{-1}(a,b))$
\\[1ex]
\> $\stackrel{\mathrm{({\text {\tiny Lemma}}\ \ref{jedmat})}}{=\joinrel=\joinrel=\joinrel=\joinrel=\joinrel=}G(F(f) \otimes F(g))\stackrel{\mathrm{({\text {\tiny
Lemma}}\ \ref{Kroneker})}}{=\joinrel=\joinrel=\joinrel=\joinrel=\joinrel=}G(F(f)) \otimes
G(F(g))$
\\[1ex]
\> $=(GF^{*})(f)\otimes (GF^{*})(g)$.
\end{tabbing}
\end{proof}

In our next Proposition we show that the composition $GF^{*}$ maps symmetry to symmetry.
\begin{prop}\label{simet}{
$(GF^{*})(\tau_{a,b})=S_{n,m}.$}
\end{prop}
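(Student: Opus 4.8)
The plan is to read off $(GF^{*})(\tau_{a,b})$ directly from the coherence axiom that makes $(F,F_{0},F_{2})$ a \emph{symmetric} monoidal functor. Write $n=(GF^{*})(a)=\dim F(a)$ and $m=(GF^{*})(b)=\dim F(b)$. The symmetry axiom asserts that the square relating the symmetry $\tau_{a,b}$ of $1Cob$ to the symmetry $\sigma$ of $Vect_{\mathbb{K}}$ through the structure isomorphisms $F_{2}$ commutes, that is
$$F(\tau_{a,b})\circ F_{2}(a,b)=F_{2}(b,a)\circ \sigma_{F(a),F(b)}.$$
Since $F_{2}(a,b)$ is an isomorphism, I would solve for the image of the symmetry and obtain
$$F(\tau_{a,b})=F_{2}(b,a)\circ \sigma_{F(a),F(b)}\circ F_{2}(a,b)^{-1}.$$

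Because $F^{*}$ agrees with $F$ on morphisms and $G$ is a functor (the matrix of a composite being the product of the matrices taken with respect to the compatible intermediate basis, exactly as used in the proof of Proposition \ref{komp}), I would apply $G$ to this equation. The essential point is to regard each arrow as a morphism of $VectB_{\mathbb{K}}$ carrying the bases chosen by $F^{*}$: namely $F_{2}(a,b):F^{*}(a)\otimes F^{*}(b)\to F^{*}(ab)$, $F_{2}(b,a):F^{*}(b)\otimes F^{*}(a)\to F^{*}(ba)$, and $\sigma_{F(a),F(b)}:F^{*}(a)\otimes F^{*}(b)\to F^{*}(b)\otimes F^{*}(a)$. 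Functoriality of $G$ then gives
$$(GF^{*})(\tau_{a,b})=G(F_{2}(b,a))\cdot G(\sigma_{F(a),F(b)})\cdot G(F_{2}(a,b))^{-1}.$$
By Lemma \ref{jedmat} both $G(F_{2}(a,b))$ and $G(F_{2}(b,a))$ are identity matrices, so the outer two factors vanish and $(GF^{*})(\tau_{a,b})=G(\sigma_{F(a),F(b)})$.

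It then remains to identify this last matrix with the commutation matrix. Computing $G(\sigma_{F(a),F(b)})$ amounts to writing the matrix of $\sigma$ with respect to the ordered bases $e_{a}\otimes e_{b}$ and $e_{b}\otimes e_{a}$. Using that $e_{a}\otimes e_{b}$ places $\varphi(e_{i},f_{j})$ in position $i\cdot m+j$ while $e_{b}\otimes e_{a}$ places $\varphi(f_{j},e_{i})$ in position $j\cdot n+i$, together with $\sigma(e_{i}\otimes f_{j})=f_{j}\otimes e_{i}$, one reads off precisely the entries of the commutation matrix $S_{n,m}$ as defined in Section 3, giving $(GF^{*})(\tau_{a,b})=S_{n,m}$. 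I expect the symmetry axiom itself to be unproblematic; the main obstacle is the bookkeeping of bases, that is, verifying that the sources and targets appearing in the decomposition of $F(\tau_{a,b})$ really carry the bases assigned by $F^{*}$, so that Lemma \ref{jedmat} applies verbatim and $G(\sigma)$ is evaluated against $e_{a}\otimes e_{b}$ and $e_{b}\otimes e_{a}$ rather than some other pair. Once this matching is confirmed, the remaining computation is routine.
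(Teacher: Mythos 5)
Your proposal is correct and follows essentially the same route as the paper: decompose $F(\tau_{a,b})$ as $F_{2}(b,a)\circ\sigma_{F(a),F(b)}\circ F_{2}(a,b)^{-1}$ via the symmetry coherence square, apply $G$, cancel the outer factors by Lemma \ref{jedmat}, and identify $G(\sigma_{F(a),F(b)})$ with the commutation matrix $S_{n,m}$. Your explicit index computation of $G(\sigma)$ in the bases $e_{a}\otimes e_{b}$ and $e_{b}\otimes e_{a}$ is just a more detailed version of the paper's remark that this matrix is independent of the chosen constituent bases.
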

\begin{proof} From the commutativity of the following diagram
\begin{center}
\begin{picture}(120,60)(0,-5)

\put(0,40){\makebox(0,0){$F(a)\otimes F(b)$}}
\put(0,0){\makebox(0,0){$F(ab)$}}
\put(105,40){\makebox(0,0){$F(b)\otimes F(a)$}}
\put(105,0){\makebox(0,0){$F(ba)$}}

\put(53,47){\makebox(0,0){$\sigma_{F(a),F(b)}$}}
\put(50,-7){\makebox(0,0){$F(\tau_{a,b})$}}
\put(-20,20){\makebox(0,0){$F_{2}(a,b)$}}
\put(125,20){\makebox(0,0){$F_{2}(b,a)$}}

\put(35,40){\vector(1,0){30}} \put(30,0){\vector(1,0){40}}
\put(0,30){\vector(0,-1){20}} \put(105,30){\vector(0,-1){20}}

\end{picture}
\end{center}

we can see that
\beaz
(GF^{*})(\tau_{a,b})=G(F(\tau_{a,b}))=G(F_{2}(b,a) \circ
\sigma_{F(a),F(b)} \circ F_{2}^{-1}(a,b))=\,\,\\
G(F_{2}(b,a))\cdot G(\sigma_{F(a),F(b)}) \cdot
G(F_{2}^{-1}(a,b))\stackrel{\mathrm{({\text {\tiny Lemma}}\
\ref{jedmat})}}{=\joinrel=\joinrel=\joinrel=\joinrel=\joinrel=}G(\sigma_{F(a),F(b)}).\eeaz
Since the matrix representation of the linear map $\sigma_{F(a),F(b)}:F(a)\otimes
F(b)\rightarrow F(b) \otimes F(a)$ is independent of the choice of the bases
$e=[e_{0},\ldots, e_{n-1}]$ and $f=[f_{0},\ldots, f_{m-1}]$ for $F(a)$ and $F(b)$, we conclude that $G(\sigma_{F(a),F(b)})=S_{n,m}$.
\end{proof}

Note that we have actually proved that
$GF^{*}:1Cob\rightarrow Mat_{\mathbb{K}}$ is a strict $1$-TQFT. Clearly, $GF^{*}$ satisfies the condition of Corollary \ref{TQFT4}. Therefore, $GF^{*}$ is faithful.

\begin{cor}\label{strong} $F^{*}:1Cob\rightarrow VectB_{\mathbb{K}}$ is faithful.
\end{cor}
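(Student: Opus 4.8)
The plan is to deduce the faithfulness of $F^{*}$ directly from the faithfulness of the composite $GF^{*}$, which has just been established by observing that $GF^{*}$ is a strict $1$-TQFT satisfying the hypotheses of Corollary \ref{TQFT4}. The key structural observation is that faithfulness is inherited backwards along a composition: if a composite $GF^{*}$ is faithful, then the first functor $F^{*}$ applied must be faithful as well. Indeed, suppose $K, L : a \to b$ are two morphisms of $1Cob$ with $F^{*}(K) = F^{*}(L)$. Applying the functor $G$ to both sides yields $G(F^{*}(K)) = G(F^{*}(L))$, that is, $(GF^{*})(K) = (GF^{*})(L)$. Since $GF^{*}$ is faithful, it follows that $K = L$ as morphisms of $1Cob$, which is exactly the assertion that $F^{*}$ is faithful.

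First I would recall explicitly that $GF^{*}$ is a strict symmetric monoidal functor: this is assembled from Proposition \ref{komp} (which shows $GF^{*}$ is strict monoidal, mapping unit to unit and respecting the monoidal product on objects and morphisms) together with Proposition \ref{simet} (which shows $GF^{*}$ sends the symmetry $\tau_{a,b}$ to the commutation matrix $S_{n,m}$). These together certify that $GF^{*}$ is a strict $1$-TQFT in the sense of the Definition in Section~4. Next I would check that it meets the numerical hypothesis of Corollary \ref{TQFT4}: by construction $G(F^{*}(+)) = \dim F(+) = p \geq 2$, since we have assumed every $a_i \in \{+,-\}$ is sent by $F$ to a vector space of dimension at least $2$. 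Hence Corollary \ref{TQFT4} applies and guarantees that $GF^{*}$ is faithful.

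With faithfulness of $GF^{*}$ in hand, the argument reduces to the one-line observation above: the equality $F^{*}(K) = F^{*}(L)$ forces $(GF^{*})(K) = (GF^{*})(L)$, and faithfulness of $GF^{*}$ yields $K = L$. I would state this as the formal proof of Corollary \ref{strong}, noting that no computation is needed beyond invoking the results already proved.

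The step requiring the most care is not the final deduction — which is purely formal — but rather making sure the chain of hypotheses for Corollary \ref{TQFT4} is genuinely satisfied by $GF^{*}$. In particular, one must confirm that $GF^{*}$ is a \emph{strict} monoidal functor (not merely strong), and this is precisely what Lemmas \ref{Galpha}, \ref{jedmat}, and \ref{Kroneker} were engineered to provide: the isomorphisms $F_{2}$ and the associator $\alpha$, which would normally introduce nontrivial change-of-basis matrices and obstruct strictness, all map under $G$ to identity matrices because of the carefully chosen recursive bases in $VectB_{\mathbb{K}}$. Thus the real content of the faithfulness of $F^{*}$ lies upstream, in the strictification achieved by passing to $VectB_{\mathbb{K}}$ and applying $G$; the corollary itself is then the harvesting of that work through the elementary fact that a functor whose composite with another functor is faithful must itself be faithful.
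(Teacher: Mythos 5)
Your proof is correct and follows exactly the paper's own route: the paper likewise notes that $GF^{*}$ is a strict $1$-TQFT satisfying the hypotheses of Corollary \ref{TQFT4}, hence faithful, and then concludes faithfulness of $F^{*}$ from the elementary fact that the first factor of a faithful composite is faithful. Your explicit verification that $G(F^{*}(+))=\dim F(+)=p\geq 2$ and your spelled-out final deduction are exactly the (mostly implicit) content of the paper's argument.
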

We can now formulate our main result.

\begin{thm}\label{strong}{Suppose $(F,F_{0},F_{2}):1Cob\rightarrow Vect_{\mathbb{K}}$ is a strong $1$-TQFT, mapping the null-dimensional manifold consisting of one point to a vector space of dimension at least $2$. Then $F$ is faithful.}
\end{thm}

\section{Acknowledgments}

The author is deeply indebted to Professor Zoran Petri\' c for having kindly suggested the problem and for his inspiring guidance and
encouragement throughout the progress of this work, without whose help this paper could not have taken this shape.
This study was supported by the Ministry of Education, Science, and
Technological Development of the Republic of Serbia (Grant 174032).

\end{document}